\newcommand{\bvec}{\mathbf b}
\newcommand{\cvec}{\mathbf c}
\newcommand{\dvec}{\mathbf d}
\newcommand{\evec}{\mathbf e}
\newcommand{\fvec}{\mathbf f}
\newcommand{\lvec}{\mathbf l}
\newcommand{\qvec}{\mathbf q}
\newcommand{\rvec}{\mathbf r}
\newcommand{\uvec}{\mathbf u}
\newcommand{\vvec}{\mathbf v}
\newcommand{\xvec}{\mathbf x}
\newcommand{\xl}{\mathbf {x^l}}
\newcommand{\xu}{\mathbf {x^u}}
\newcommand{\yvec}{\mathbf y}
\newcommand{\zl}{\mathbf {z^l}}
\newcommand{\zu}{\mathbf {z^u}}
\newcommand{\0}{\mathbf 0}
\newcommand{\dx}{\boldsymbol{\Delta x}}
\newcommand{\dxl}{\boldsymbol{\Delta x^l}}
\newcommand{\dxu}{\boldsymbol{\Delta x^u}}
\newcommand{\dy}{\boldsymbol{\Delta y}}
\newcommand{\dzl}{\boldsymbol{\Delta z^l}}
\newcommand{\dzu}{\boldsymbol{\Delta z^u}}
\newcommand{\cpp}{C\texttt{++} }
\newtheorem{assumption}{Assumption}
\newtheorem{proposition}{Proposition}
\title{A factorisation-based regularised interior point method using the augmented system}
\author[1]{Filippo Zanetti\footnote{f.zanetti@ed.ac.uk}}
\author[1]{Jacek Gondzio}
\affil[1]{School of Mathematics, University of Edinburgh, Edinburgh, UK}
\date{}
\begin{document}
\maketitle

\begin{abstract}
\noindent
This paper describes the implementation of a new interior point solver for linear programming for the open-source optimization library HiGHS. The solver uses a direct factorisation to solve the Newton systems, choosing the best approach between the normal equations and augmented system. Details of the multifrontal factorisation routine are given, with attention to the features that allow to achieve high performance, like storage formats, use of efficient dense linear algebra subroutines and parallelism. The paper also describes the use of pivoting and regularisation strategies to ensure that a stable factorisation is obtained, despite the ill-conditioning of the matrices. Results on three different collections of problems are presented which highlight the improved performance of the solver compared to the existing HiGHS interior point method.
\end{abstract}

\noindent\textbf{Keywords}: HiGHS; Linear programming; Interior point method; Factorisation; Regularisation

\section{Introduction}
\label{intro}
HiGHS is the world's best open source linear optimization library. It includes a simplex \cite{HuaHal:highs} and interior point \cite{SchGon:basisIPM} solvers for linear programming, a branch-and-cut solver for mixed integer linear programs, and an active set method for quadratic programs.

The interior point solver, called IPX \cite{SchGon:basisIPM}, uses a Krylov subspace iterative solver with a basis preconditioner to solve the linear systems of normal equations at each interior point iteration. This strategy allows problems to be solved without actually forming the normal equations matrix and without having to compute or store its factorisation, which in some cases could be very expensive. 
IPX builds and updates a maximum volume basis for the scaled constraint matrix and uses it to construct an efficient preconditioner for the normal equations matrix. The construction of such basis leverages simplex-like techniques and, when an optimal solution is approached, identifies a close-to-optimal basis, which in turn facilitates crossover to find an optimal vertex. Employing a preconditioner based on maximum volume basis exploits well the sparsity of the constraint matrix and delivers optimal solutions with high accuracy.
For this reasons, IPX has attracted a lot of attention in the open source community, especially for energy modelling. IPX has been shown to outperform other commonly used open source interior point methods when solving problems coming from energy modelling, and to have a comparable performance to Gurobi \cite{gurobi} on many such instances \cite{openenergy,ParHalJenBro:highs}. 

However, IPX has been shown to struggle to solve certain problems and to have a running time that is difficult to predict beforehand. Due to the complicated nature of the preconditioner and the use of iterative method, it is sometimes difficult to understand the factors that lead to this unfavourable behaviour. Moreover, the approach taken by IPX cannot be generalised to quadratic programming and it does not exploit any parallelism. For these reasons, a new interior point solver has been developed for HiGHS, which uses a more standard factorisation-based approach.

Despite having a higher memory requirement, an interior point method based on a factorisation has many advantages. Its running time can be estimated based on the number of nonzero entries in the factor and on the number of floating point operations, which are known immediately after the symbolic factorisation. The running time is also very consistent, with little variations from one iteration to the other. If implemented properly, a factorisation-based interior point method can achieve a good accuracy on many problems, despite the extreme ill-conditioning of the matrices. However, for many instances, factorising or even just forming the normal equations matrix is not an option, for example due to the presence of dense columns. Being able to work with the alternative approach of the augmented system has many advantages related both to memory constraints and to the numerical properties of the algorithm. However, this approach comes also with challenges, due the indefiniteness of the augmented system matrix. 

Practical implementations of interior point methods have been studied for a long time, see e.g. \cite{AndGonMesXu:implementation,GonTer:computational,Meh:predcor}. The factorisation of the augmented system, combined with the use of regularisation to allow static pivoting, has been analysed e.g. in \cite{AltGon:regularization}. Private communication with developers of commercial solvers confirms that they use similar techniques in their barrier solvers although their developments are not documented in scientific papers.

Indeed, many available implementations either are closed source, with no possibility of knowing the details of the strategies used, or have been developed mainly for research purposes. In the latter case, despite the enormous research value of such implementations, the corresponding codes are not developed for performance and cannot realistically be used for practical or commercial applications, apart from very specific classes of problems. 

To the best of the Authors' knowledge, the solver described in this paper is the first attempt to develop a fully open source interior point method for general purpose linear programming, capable to deliver high performance, that uses the augmented system approach. Many other implementations fail to be fully open source (for example by using a closed source factorisation code), fail to reach high performance, are developed only with specific applications in mind, or use only the normal equations. See e.g. \cite{clp,glpk} for some available implementations.

This paper describes the main features of the new solver {\it HiPO} (HiGHS interior point optimizer) and presents numerical results to compare it with the existing HiGHS interior point solver IPX \cite{SchGon:basisIPM}. Experiments are conducted on the Netlib collection of LPs, on the Mittelmann benchmark, and on a collection of problems coming from the open-source energy models PyPSA-Eur, in order to show the robustness of the new solver and the gains that it can achieve compared to IPX.

We highlight that the new solver has far reaching consequences for some other HiGHS solvers as well: it allows to speed up the solution of mixed integer programs, since it can be used to solve the LP relaxation at the root of the search tree in the near future; it will also speed up the solution of quadratic programs, since the new solver can be easily extended to include a quadratic term.

The rest of the paper is organised as follows: Section~\ref{sec:formulation} introduces the LP formulation used; Section~\ref{sec:ipm} describes the interior point method; Section~\ref{sec:linearsystem} describes the features of the linear solver; Section~\ref{sec:results} presents the numerical results.

\subsection{Notation}
Vectors are represented with bold characters ($\vvec$), vector components as non-bold characters ($v_i$), and matrices with capital letters ($M$). Given a vector $\xvec$, we define a diagonal matrix $X$ such that it has vector $\xvec$ on the diagonal. $I$ represents the identity matrix, $\evec$ the vector of all ones, $\0$ the vector of all zeros, where their size can be deduced from the context. $\|\cdot\|$ represents the Euclidean norm. $\|\vvec\|_\infty$ represents the infinity norm of a vector, i.e., $\max_i |v_i|$.
Given a set $\mathcal S$, we denote its cardinality by $|\mathcal S|$.

\section{Linear programming formulation}
\label{sec:formulation}

Consider a linear program (LP) in the following formulation
\begin{align}
\label{eq:primal}
\min_{\xvec, \xl, \xu} \quad\cvec^T\xvec&\notag\\
\textup{s.t.}\qquad A\xvec &= \bvec\\
\xvec - \xl &= \lvec \notag\\
\xvec + \xu &= \uvec \notag\\
\xl,\xu&\ge\0,\notag
\end{align}
with corresponding dual
\begin{align}
\label{eq:dual}
\max_{\yvec, \zl, \zu} \quad\bvec^T\yvec + \lvec^T\zl - \uvec^T&\zu\notag\\
\textup{s.t.}\qquad\,\, A^T\yvec +\zl - \zu &= \cvec\\
\zl,\zu&\ge\0,\notag
\end{align}
where $A\in\mathbb R^{m\times n}$, $\bvec,\yvec\in\mathbb R^m$, $\cvec,\xvec,\xl,\xu,\lvec,\uvec,\zl,\zu,\in\mathbb R^n$.

A more frequently used formulation to define an LP is the following
\begin{align}
\label{eq:primal_user}
\min_{\tilde\xvec} \quad\tilde\cvec^T\tilde\xvec&\notag\\
\textup{s.t.}\qquad \tilde A\tilde\xvec &\lesseqgtr \tilde\bvec\\
\tilde\lvec\le\tilde\xvec&\le\tilde\uvec,\notag
\end{align}
where each linear constraint can be $\le$, $=$ or $\ge$.

Formulation \eqref{eq:primal_user} is more convenient to be used by a user when inputting a problem in a solver. Formulation \eqref{eq:primal} is more convenient when used internally in an interior point solver. One can easily convert from \eqref{eq:primal_user} to \eqref{eq:primal} simply by adding slack variables to the inequality constraints, and by putting the proper bounds on the slacks. Notice that this formulation is very similar to the one used in \cite{SchGon:basisIPM}.

Define the sets $\mathcal L=\{j:l_j >-\infty\}$ and $\mathcal U=\{j:u_j<+\infty\}$. For ease of exposition, assume that there are no free variables, i.e., $\mathcal L\cup\mathcal U=\{1,\dots,n\}$. Notice that the algorithm can deal with free variables, but they are ignored in the current explanation for simplicity. Notice also that if one of the bounds is infinite, then the corresponding $x^l_j$ or $x^u_j$ is infinite (to guarantee that the linear constraints in \eqref{eq:primal} are satisfied), and the corresponding $z^l_j$ or $z^u_j$ is set to zero (otherwise the objective of \eqref{eq:dual} would be $-\infty$).

\subsection{Scaling}
The original problem provided by the user \eqref{eq:primal_user} is scaled using the Curtis-Reid scaling \cite{CurRei:scaling}. The scaling procedure produces positive diagonal matrices $R$ and $C$ such that $R\tilde AC$ is ``better scaled'' than $\tilde A$. The scaled problem then looks like

\begin{align}
\label{eq:primal_user_scaled}
\min_{C^{-1}\tilde\xvec} \quad (C\tilde\cvec)^T(C^{-1}\tilde\xvec)&\notag\\
\textup{s.t.}\qquad (R\tilde AC)(C^{-1}\tilde\xvec) &\lesseqgtr (R\tilde\bvec)\\
(C^{-1}\tilde\lvec)\le(C^{-1}\tilde\xvec)&\le(C^{-1}\tilde\uvec)\notag.
\end{align}
This scaled problem is then cast into formulation \eqref{eq:primal} by adding slack variables.
Given an approximate solution $(\xvec,\xl,\xu,\yvec,\zl,\zu)$ of the scaled problem, the corresponding unscaled solution is $(C\xvec,C\xl,C\xu,R\yvec,C^{-1}\zl,C^{-1}\zu)$.

Apart from the unscaling process, to recover a solution to the original LP \eqref{eq:primal_user} the signs of the slack variables and Lagrange multipliers for inequality constraints need to be checked carefully. Indeed, there is no explicit sign restrictions on $\xvec$ or $\yvec$ in the internal formulation \eqref{eq:primal}-\eqref{eq:dual}; this means that slacks or Lagrange multipliers may slightly violate the sign constraints that the user expects based on the sign of the inequalities in \eqref{eq:primal_user}.

This issue can be resolved by using variables $\xl,\xu$ to set the correct slacks and $\zl,\zu$ to set the correct Lagrange multipliers. Suppose that, out of $\tilde{n}$ constraints, inequality $i$ requires a slack $x_\iota$ in order to become an equality constraint, where $\iota=\tilde{n}+k$ and $k$ depends on how many slacks have already been added for the previous constraints. Suppose that the inequality has sign $\le$, so that $0\le x_\iota<\infty$. Then, the internal formulation includes a constraint $x_\iota-x^l_\iota=0$, since the variable has a finite lower bound. However, only $x^l_\iota$ is guaranteed to be positive and the constraint could have a small violation. Therefore, the correct slack should be recovered from the value of $x^l_\iota$ rather than $x_\iota$. Similarly, the $\iota^\text{th}$ dual constraint reads $y_i + z^l_\iota = 0$, so that the Lagrange multiplier of inequality $i$ with correct sign should be recovered from $-z^l_\iota$, rather than from $y_i$.

\section{Interior point method}
\label{sec:ipm}

An interior point method (IPM) uses the Newton method to solve the first order optimality conditions of the barrier problem corresponding to \eqref{eq:primal}-\eqref{eq:dual}, see e.g. \cite{Wri:book}. This gives rise to a linear system of the form
\begin{equation}
\label{eq:big_system}
\begin{bmatrix}
A &     &     &     &     &     \\
I &     & -I  &     &     &     \\
I &     &     & I   &     &     \\
  & A^T &     &     & I   & -I  \\
  &     & Z^l &     & X^l &     \\
  &     &     & Z^u &     & X^u \\
\end{bmatrix}
\begin{bmatrix}
\dx \\ \dy \\ \dxl \\ \dxu \\ \dzl \\ \dzu \\
\end{bmatrix}
=
\begin{bmatrix}
\bvec-A\xvec           \\
\lvec-\xvec+\xl        \\
\uvec-\xvec-\xu        \\
\cvec-A^T\yvec-\zl+\zu \\
\sigma\mu\evec-X^lZ^l\evec   \\
\sigma\mu\evec-X^uZ^u\evec   \\
\end{bmatrix}
\eqqcolon
\begin{bmatrix}
\rvec_1 \\ \rvec_2 \\ \rvec_3 \\
\rvec_4 \\ \rvec_5 \\ \rvec_6 \\
\end{bmatrix}
\end{equation}
where $X^l,X^u,Z^l,Z^u$ are diagonal matrices with the corresponding vectors on the diagonal, $\mu$ is the complementarity measure
\[\mu = \frac{1}{|\mathcal L| + |\mathcal U|}\Bigl(\sum_{i\in\mathcal L}x^l_iz^l_i + \sum_{i\in\mathcal U}x^u_iz^u_i\Bigr),
\]
and $\sigma$ is the target reduction in complementarity for the next iteration.

Notice that care must be taken when dealing with the bounds $\lvec$ and $\uvec$, and the corresponding primal and dual variables: if they are infinite, then the corresponding entry in the residual vectors should be ignored and set to zero.

This $(5n+m)\times(5n+m)$ nonsymmetric linear system can be reduced to the {\it augmented system}
\begin{equation}
\label{eq:augsys}
\begin{bmatrix}
-\Theta^{-1} & A^T \\ A & \\
\end{bmatrix}
\begin{bmatrix}
\dx \\ \dy \\
\end{bmatrix}
=
\begin{bmatrix}
\rvec_7 \\ \rvec_1
\end{bmatrix},
\end{equation}
where $\rvec_7=\rvec_4-(X^l)^{-1}(\rvec_5+Z^l\rvec_2)+(X^u)^{-1}(\rvec_6-Z^u\rvec_3)$ and $\Theta$ is a diagonal matrix with 
\[\Theta_{jj}^{-1} = \frac{z^l_j}{x^l_j}+\frac{z^u_j}{x^u_j}.\]
The solution of the original linear system can be recovered as
\begin{align}
\dxl &= \dx-\rvec_2                 \notag\\
\dxu &= \rvec_3 - \dx               \notag\\
\dzl &= (X^l)^{-1}(\rvec_5-Z^l\dxl) \notag\\
\dzu &= (X^u)^{-1}(\rvec_6-Z^u\dxu) \notag.
\end{align}
The linear system \eqref{eq:augsys} can be further reduced to the {\it normal equations} system
\begin{equation}
\label{eq:normeq}
A\Theta A^T\dy = \rvec_1+A\Theta\rvec_7 \eqqcolon \rvec_8
\end{equation}
where $\dx$ is recovered as $\dx=\Theta(A^T\dy-\rvec_7)$.

A typical iteration of an interior point method includes the following steps:
\begin{itemize}
\item Compute the predictor direction, solving \eqref{eq:big_system} with $\sigma=0$.
\item Compute a centrality corrector, aiming at recovering centrality. This involves setting $\rvec_1=\rvec_2=\rvec_3=\rvec_4=\0$ and computing $\rvec_5$ and $\rvec_6$ based on the complementarity products of the point reached with the predictor direction, as described in \cite{ColGon:mcc}.
\item Keep computing correctors until they bring some benefit, or until the maximum number of correctors is reached.
\item Compute the step sizes based on the overall Newton direction and take the step.
\end{itemize}

\subsection{Termination}

Define the primal and dual objective values as $f_p \coloneqq \cvec^T\xvec$ and $f_d\coloneqq \bvec^T\yvec+ \lvec^T\zl - \uvec^T\zu$. The IPM iterations are stopped when the following conditions are met:
\begin{itemize}
\item Primal feasibility: $\|\{\rvec_1; \rvec_2; \rvec_3\}\|_\infty \le \tau_\text{feas} (1+\|\{\bvec;\lvec;\uvec\}\|_\infty)$,
\item Dual feasibility: $\|\rvec_4\|_\infty \le \tau_\text{feas} (1+\|\cvec\|_\infty)$,
\item Optimality: $|f_p - f_d| \le \tau_\text{opt} (1+\frac{1}{2}|f_p+f_d|)$,
\end{itemize}
where $\tau_\text{feas}$ and $\tau_\text{opt}$ are feasibility and optimality tolerances respectively. Notice that the check on optimality is done using the relative primal-dual gap, rather than the value of $\mu$, since the latter depends on the scaling of the problem. Notice also that in practice the primal-dual feasibility termination test is preformed using the quantities coming from the unscaled problem, i.e.,
\begin{itemize}
\item $\|\{R^{-1}\rvec_1; C\rvec_2; C\rvec_3\}\|_\infty \le \tau_\text{feas} (1+\|\{R^{-1}\bvec;C\lvec;C\uvec\}\|_\infty)$,
\item $\|C^{-1}\rvec_4\|_\infty \le \tau_\text{feas} (1+\|C^{-1}\cvec\|_\infty)$,
\end{itemize}
because the process of unscaling the solution may alter the residuals. The primal-dual gap instead is unaffected.

If the user requests crossover, this is performed as described in \cite{SchGon:basisIPM}. In this case, the IPM termination test is stricter than the one stated above, to guarantee that, after dropping variables to their bounds, the primal and dual infeasibilities remain small.

\subsection{Regularisation}
As proposed in \cite{AltGon:regularization}, the linear systems \eqref{eq:augsys} and \eqref{eq:normeq} are slightly perturbed by primal and dual positive diagonal regularisation matrices $R_p$ and $R_d$, so that the matrices of the linear systems become respectively:
\begin{equation}
\label{eq:regul_matrix}
\begin{bmatrix}
-(\Theta^{-1}+R_p) & A^T \\ A & R_d\\
\end{bmatrix},
\qquad
A(\Theta^{-1}+R_p)^{-1} A^T + R_d.
\end{equation}
The regularisation matrices contain both a {\it static} component, i.e., regularisation applied uniformly to all diagonal entries, and a {\it dynamic} component, i.e., regularisation applied only to the diagonal entries which require a more significant perturbation during the factorisation stage, as described later.

The use of regularisation improves the conditioning and spectral properties of the matrices \cite{Gon:matrixfree} and crucially transforms the augmented system from a generic indefinite matrix, to a quasi-definite one. As shown in \cite{Van:quasidefinite}, quasi-definite matrices are strongly factorisable and this gives much needed freedom when choosing the order of pivots during the factorisation stage.

\section{Solving the linear system}
\label{sec:linearsystem}

The Newton direction can be computed by solving either \eqref{eq:augsys} or \eqref{eq:normeq}. This can be achieved with a direct method or an iterative procedure. The current HiGHS interior point method IPX solves the normal equations, using the conjugate residual method and basis preconditioning \cite{SchGon:basisIPM}. This approach allows to solve large problems without having to form the matrix in \eqref{eq:normeq} and without having to compute and store an expensive factorisation; it also allows to obtain very precise Newton directions that guarantee convergence of the IPM for most problems. However, the time required to perform a single IPM iteration varies substantially and is very difficult to predict, because it is impossible to foresee the changes to the basis that the solver selects and the number of Krylov iterations needed to reach the required accuracy. It is also not possible to compute many correctors, since the iterative procedure has the same cost for each of them. Moreover, the approach chosen by IPX does not generalise to quadratic programming and does not exploit multi-threading.

A factorisation-based solver needs to explicitly form the matrix in \eqref{eq:augsys} or \eqref{eq:normeq}, compute its factorisation and store it in memory. This limits the maximum size of the problem that can be solved, depending on the amount of memory available. However, the cost of performing a factorisation is much more stable and predictable. Once the factorisation is available, many solves can be performed with it to compute correctors. Moreover, matrix \eqref{eq:augsys} easily generalises to the quadratic case, with minimal changes to the algorithm \cite{AltGon:regularization}. 

Here and in the rest of the paper, the word {\it solve} is used to refer to the procedure of computing a solution of a linear system using the $LDL^T$ factorisation. This includes a forward solve using $L$, a block-diagonal solve using $D$ and a backward solve using $L^T$.

\subsection{Multifrontal factorisation}
We use a multifrontal factorisation; both the normal equations and augmented system are factorised into $LDL^T$, where $L$ is unit lower triangular and $D$ is block diagonal with blocks of size 1 or 2. The multifrontal factorisation is the most commonly used technique to factorise large sparse matrices, see e.g. \cite{Duf:MA57,DufRei:multifrontal,Liu:multifrontal}.

\begin{figure}
\caption{Example of multifrontal factorisation}
\label{fig:multifrontal}
\centering
\includegraphics[width=.6\textwidth]{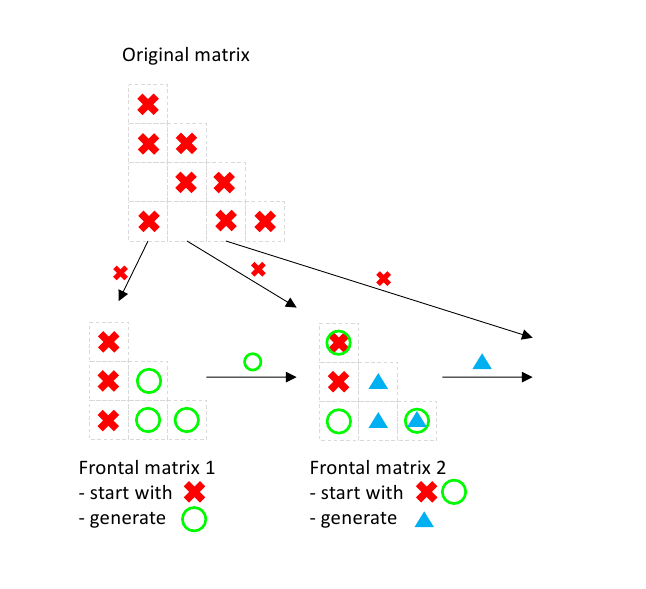}
\end{figure}

Looking at Figure~\ref{fig:multifrontal}, the multifrontal factorisation processes one column at a time. Each column corresponds to a dense {\it frontal matrix} that needs to be factorised, containing the entries of the original matrix and the entries from any Schur complement that affects the current column. For example, the frontal matrix of column 1 contains only the red crosses coming from the original matrix; the Schur complement that it produces after being factorised is shown as green circles. The frontal matrix of column 2 contains the red crosses coming from the original matrix and the green circles coming from the Schur complement of column 1; the second frontal matrix produces as Schur complement the blue triangles, which are then passed on to the frontal matrix of the third column, and so on. See \cite{ScoTum:direct} for a gentle introduction to the topic of direct solvers.

The Schur complements are stored only for the time needed to pass them from one frontal matrix to another. The first column of each frontal matrix instead becomes a column of the factor $L$. Modern linear solvers do not work on individual columns, but rather on supernodes, i.e., blocks of consecutive columns with the same sparsity structure in the factor $L$ \cite{LiuNgPey:supernodes}. This is done to achieve better efficiency on modern computer architectures and to reduce the number of operations needed to pass the Schur complements from one frontal matrix to the other.

From the above description, it should be clear that the key to an efficient sparse factorisation routine lies in finding dense substructures within the sparse matrix. Then, two types of operations need to be performed:
\begin{itemize}
\item Dense operations, e.g. the dense factorisation of the frontal matrices. These exploit efficient dense routines like BLAS \cite{DonDucHamDuf:blas} and achieve a very high peak of CPU performance. Such operations can also be parallelised efficiently. 
\item Sparse operations, e.g. summing the contribution of one Schur complement into another frontal matrix. These operations involve more communication and irregular memory accesses and therefore can be quite slow. Their impact can be reduced by amalgamating supernodes \cite{AshGri:supernodes}.
\end{itemize}

\subsection{Ordering and symbolic factorisation}

Before performing the actual factorisation, the linear solver performs a nested dissection ordering using Metis \cite{KarKum:metis}. On top of being a good quality fill-reducing heuristic, nested dissection ordering produces an elimination tree that is ``wide'' rather than ``tall'', i.e., it has more independent branches and is more suitable to be parallelised.

The IPM needs to solve linear systems that always have the same sparsity structure. Therefore, the ordering and symbolic factorisation need only be computed once and can be reused for all subsequent factorisations. This is an important advantage, since running Metis on large problems can be expensive. 

In principle, ordering and symbolic factorisation of both the normal equations \eqref{eq:normeq} and augmented system \eqref{eq:augsys} can be performed and then the better one can be selected. However, for some problems, forming the normal equations matrix alone can be much more expensive than factorising the augmented system. Therefore some precautions are required to avoid potentially expensive and unnecessary operations. For example, the symbolic factorisation of the augmented system should be run first; if, while forming the normal equations, the number of nonzero entries in the matrix is larger than the number of nonzero entries in the factorisation of the augmented system, the normal equations approach can be discarded before even terminating the forming of the matrix. Naturally, one approach may be preferred to the other for various reasons, for example due to the presence of dense columns.

\subsection{Elimination tree}
A fundamental tool to implement a multifrontal solver is the {\it elimination tree}, which gives the dependencies between columns (or supernodes), i.e., the list of columns that need to be factorised before a given column can be processed. Figure~\ref{fig:tree} shows an example of an elimination tree: column 3 needs to wait until columns 1 and 2 are processed, before being factorised.
\begin{figure}
\caption{Example of elimination tree}
\label{fig:tree}
\centering
\includegraphics[width=.6\textwidth]{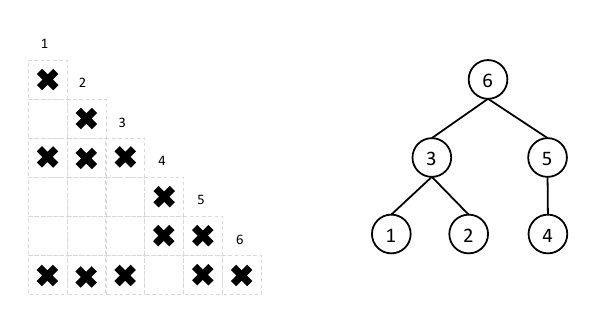}
\end{figure}
The shape of the tree determines the branches that can be processed in parallel. In Figure~\ref{fig:tree}, the branches 1-2-3 and 4-5 can be processed independently, on different threads; node 6 needs to wait for both these branches to complete.

Notice that the elimination tree can be computed efficiently \cite{Dav:direct}. Observe also that the tree that is actually used in practice is the supernodal elimination tree, which gives dependencies among supernodes, rather than single columns.

The issue naturally arises of when to allocate memory for a given  frontal matrix, since the multiple Schur complements to sum into it can become available at different stages of the factorisation. Much research has gone into this topic which is too complicated to be described here. Instead, we refer the interested reader to \cite{GueLex:multifrontal}.

When dealing with a parallel code, it is important to obtain {\it deterministic} behaviour (also known as {\it bit compatibility}), i.e., multiple runs of the code with the same input should always return the exact same output and the exact same sequence of intermediate states. This is important for reproducibility of the results and to facilitate debugging. To achieve determinism, the Schur complements need to be summed into the frontal matrix always in the same order. In Figure~\ref{fig:tree}, nodes 3 and 5 may be processed in parallel, while node 6 waits for them; however, node 6 should decide on a deterministic order in which to sum the contributions from nodes 3 and 5. The fact that node 5 may be ready to be summed before node 3, or vice versa, should not change the order of operations performed, since this can lead to non deterministic results, due to the properties of floating point arithmetic. See \cite{HogSco:report} for more details.

The parallel processing of the elimination tree just described is know as {\it tree-level} parallelism. A second type of parallelism is available in a multifrontal solver, known as {\it node-level} parallelism: this is the act of exploiting multiple threads to perform the dense factorisation of a specific frontal matrix. This strategy is particularly effective when dealing with nodes close to the root of the tree, since these are likely to have much larger frontal matrices, and it is likely that very few branches are left to be processed in parallel, reducing the contention for available threads. Notice that determinism is a concern for this strategy too: the main operations that can be parallelised are the large matrix-matrix products needed to update the Schur complement in the blocked dense factorisation, and these can be performed easily in a deterministic way.

\subsection{Dense matrices}

An important aspect for the efficiency of the code is the format used to store the dense frontal matrices. There are two objectives: minimising the amount of memory needed, and guaranteeing good cache locality when calling BLAS functions. Since the matrices involved are symmetric, there is no need to store the upper triangular part of the matrix, as in Figure~\ref{fig:format1}. However, doing so means that the rows of the matrix are not well aligned in memory. To improve performance, dense matrices are stored in blocks of columns, in which each block stores explicitly the corresponding upper triangular part, in order to maintain advantageous alignment of the data. Within each block of columns, the data can be stored by columns, as in Figure~\ref{fig:format2}, or by rows, as in Figure~\ref{fig:format3}. As it was shown in \cite{AndGunGusReiWas:format}, using a format that stores the blocks of columns by rows guarantees a better cache locality when performing the dense factorisation of the frontal matrices. Therefore, the format shown in Figure~\ref{fig:format3} is used during the factorisation of the frontal matrices. However, this format is impractical to use while assembling the contributions of the Schur complements into a frontal matrix; it is more natural to assemble the contributions by columns. So, the matrix is only put into the hybrid format of Figure~\ref{fig:format3} to perform the dense factorisation and is kept in the format of Figure~\ref{fig:format1} for the remaining operations. Notice that when using complicated pivoting strategies, it may be advantageous to use the format of Figure~\ref{fig:format2}, since it gives easier access to the full pivotal column.

\begin{figure}
\caption{Different storage formats for dense matrices: white squares are not stored, light blue squares are stored but not used, dark blue squares are stored and used. The numbers indicate the address of the entry within the array.}
\centering
\begin{subfigure}{0.28\textwidth}
\centering
\includegraphics[width=\textwidth]{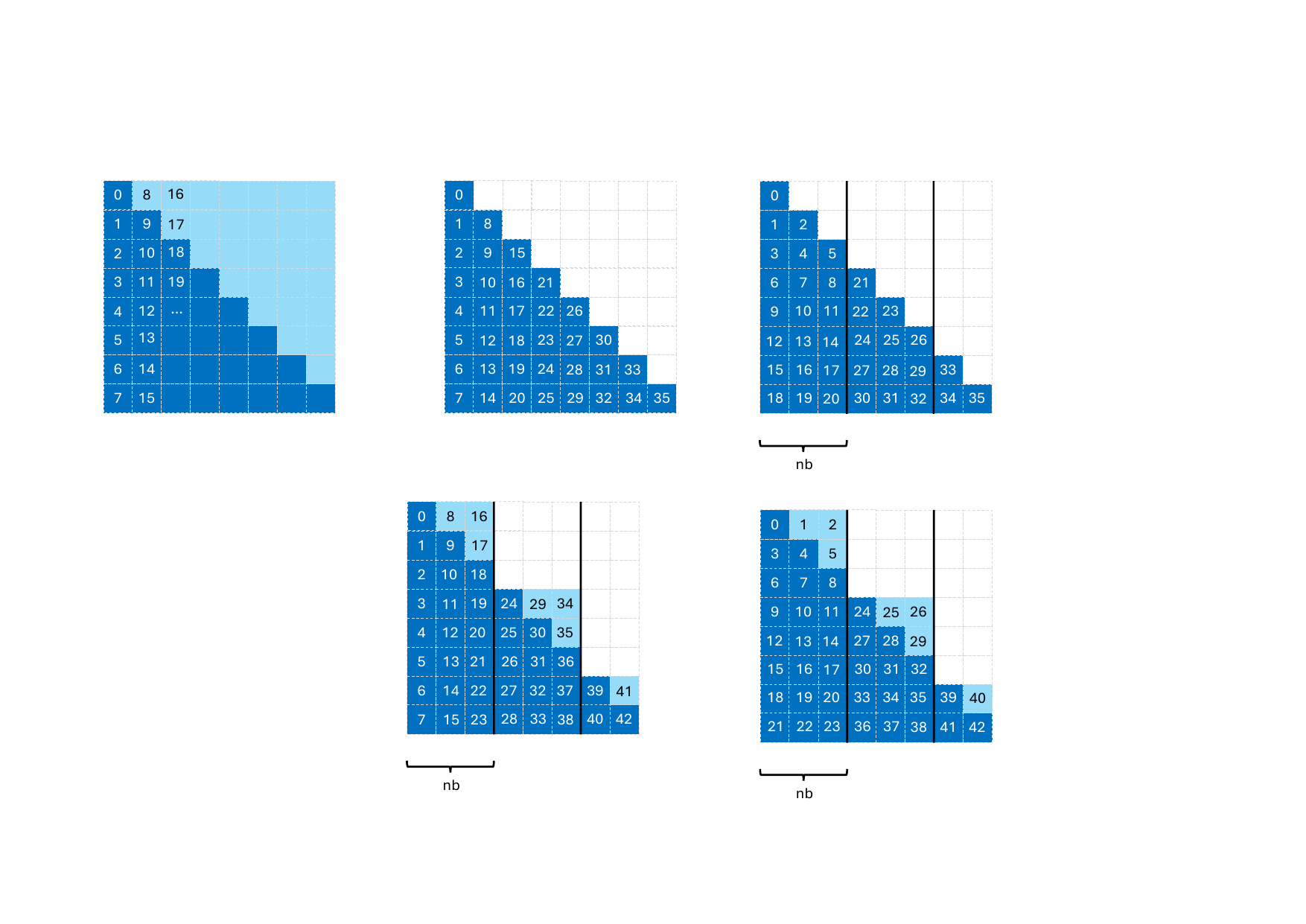}
\caption{}
\label{fig:format1}
\end{subfigure}
\begin{subfigure}{0.28\textwidth}
\centering
\includegraphics[width=\textwidth]{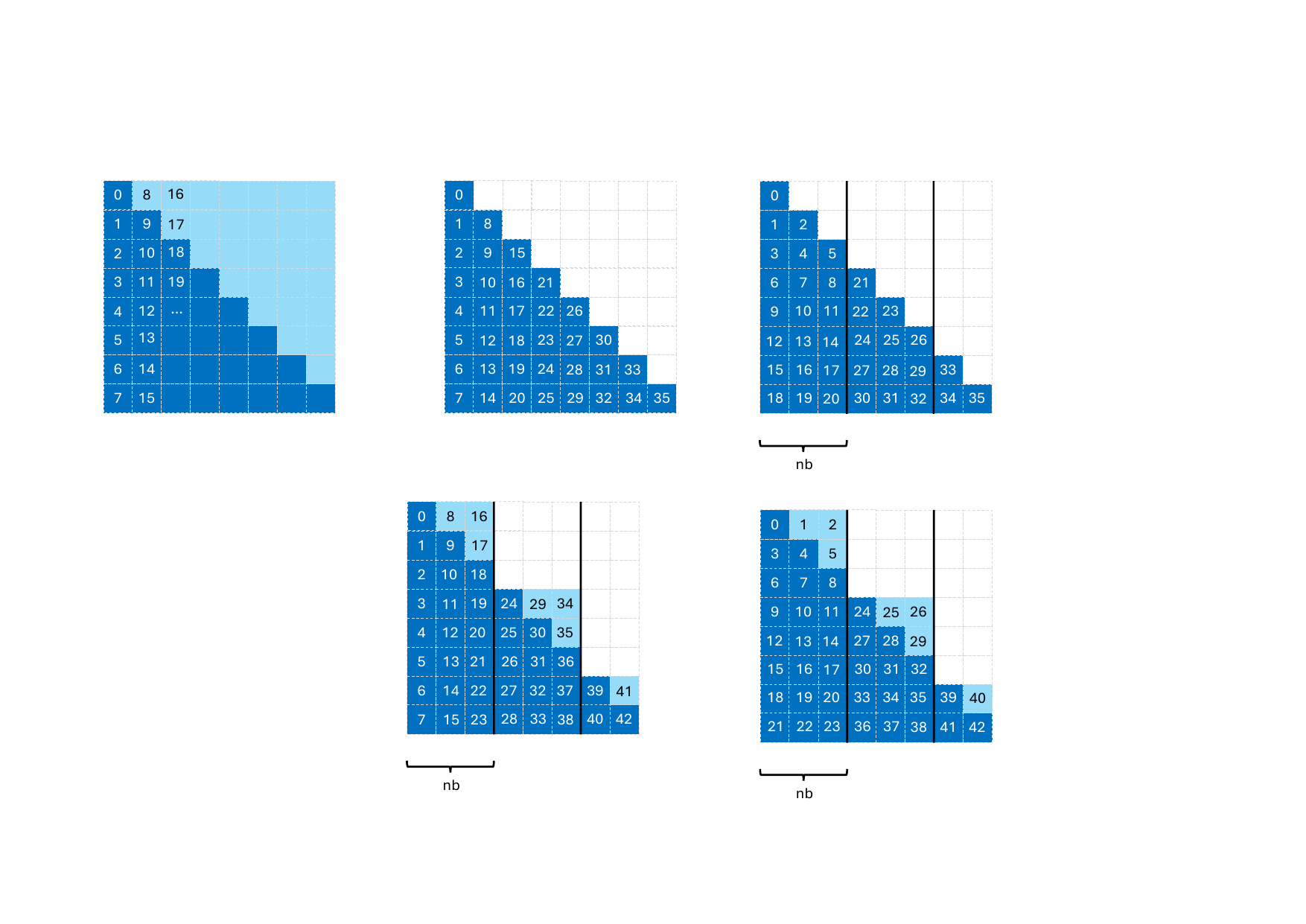}
\caption{}
\label{fig:format2}
\end{subfigure}
\begin{subfigure}{0.28\textwidth}
\centering
\includegraphics[width=\textwidth]{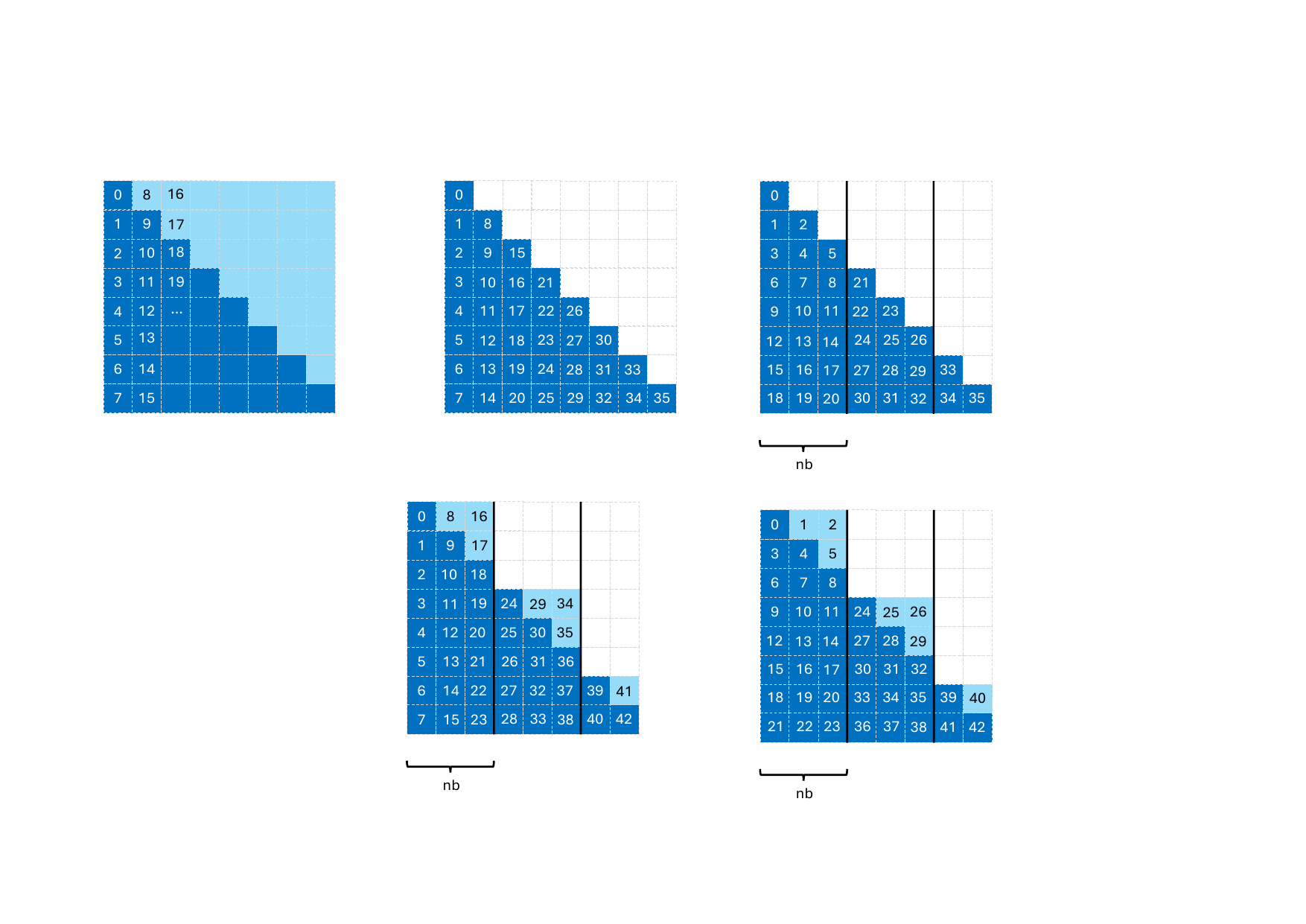}
\caption{}
\label{fig:format3}
\end{subfigure}
\end{figure}

The size of the blocks should be chosen depending on the amount of cache memory available. Common sizes for modern machines are 32, 64 or 128 columns.

Looking at Figure~\ref{fig:blocks}, we can identify multiple blocks of columns within a given frontal matrix: the block $B_D$ is the diagonal block currently being factorised; all the pivots before block $B_D$ have already been eliminated. Block $B_P$ consists of the rest of the pivotal columns corresponding to pivots in $B_D$. Blocks $B_U$ and $B_V$ have already been computed and are used to update the current pivotal columns. $D_V$ indicates the portion of matrix $D$ that includes the pivots corresponding to columns in blocks $B_V$ and $B_U$.

The dense factorisation proceeds as a sequence of BLAS calls: 
\begin{itemize}
\item level-3 subroutine \texttt{dgemm} is used to compute $B_D\leftarrow B_D-B_V D_V B_V^T$ and $B_P\leftarrow B_P-B_U D_V B_V^T$, for each block to the left of $B_D$.
\item level-3 subroutine \texttt{dtrsm} is used to compute $B_P\leftarrow B_P L_{BD}^{-T}$, where $L_{BD}$ is the triangular factor of $B_D$.
\item level-2 subroutines are used to factorise $B_D$ using a {\it factorisation kernel}.
\item level-2 subroutines \texttt{dgemv} and \texttt{dtrsv} are used in the solve phase to perform matrix-vector products and solves with triangular matrices.
\item many level-1 subroutines are used throughout the code to copy, add, scale and swap vectors.
\end{itemize}
See \cite{DonDucHamDuf:blas} for a description of BLAS functions.

\begin{figure}
\caption{Blocks for dense factorisation.}
\label{fig:blocks}
\centering
\includegraphics[width=.45\textwidth]{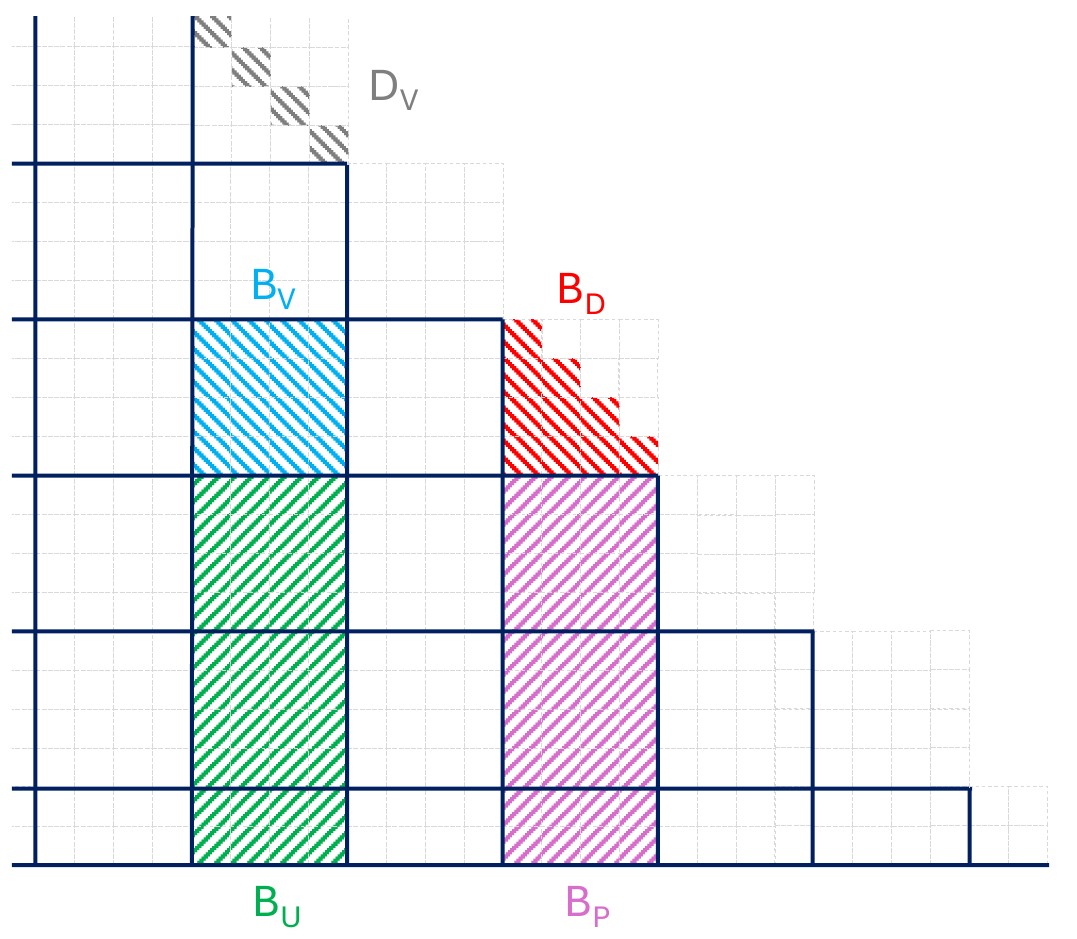}
\end{figure}

\subsection{Pivoting and regularisation}
\label{sec:stability}

When performing matrix factorisation, it is important to avoid selecting small pivot elements; otherwise, very large entries can appear in matrix $L$, leading to an accumulation of round-off errors and eventual loss of accuracy during the rest of the factorisation and the solve phase. We employ the following two techniques to prevent this form happening:
\begin{itemize}
\item Pivoting: pivots are only accepted if they are large enough, otherwise columns are interchanged to obtain a valid pivot, or a $2\times2$ pivot is used. This can lead to the phenomenon of delayed pivots, which increases in an unpredictable way the amount of time and memory required to complete the factorisation. See e.g. \cite{AshGriLew:pivoting}.
\item Perturbation: pivots that are not acceptable are regularised (perturbed) so that their magnitude becomes large enough. This technique produces a factorisation of a modified matrix, but it has the advantage of avoiding expensive pivoting operations. See e.g. \cite{AltGon:regularization}.
\end{itemize}

If the pivoting operations are performed only within a certain supernode, then the same sparsity pattern determined by the symbolic factorisation can be used and no changes to the data structures are needed. This is known as {\it static pivoting}. If however it is not possible to choose a valid pivot within the supernode, then perturbation must be applied to deliver an acceptable pivot and complete the factorisation successfully.

In the context of interior point methods, the perturbation can be interpreted as a regularisation of the underlying optimization problem, which has been studied extensively, see e.g. \cite{AltGon:regularization,CipGon:psipm,FriOrb:regularization}. It is thus less problematic to work with a modified matrix, as long as the perturbation remains small enough. Notice that, when using the augmented system approach, the pivots coming from the $(1,1)-$block in \eqref{eq:augsys} should receive a negative contribution from the regularisation, while the others should receive a positive contribution, in order to preserve the correct sign of the pivots and the correct inertia of the matrix. 

Based on the ideas of \cite{SchGar:pivoting}, we use the following strategy, which is a variation of Bunch-Kaufman pivoting \cite{BunKau:pivoting}. Given a matrix $M$ for which $p-1$ pivots have already been eliminated, the next pivot candidate is $M_{pp}$. Define $r$ so that $M_{rp}$ is the entry of largest magnitude in column $p$; call this magnitude $\gamma_p\coloneqq|M_{rp}|$. Find the entry of largest magnitude in row/column $r$ and call this magnitude $\gamma_r$.
\begin{itemize}
\item If $\max(|M_{pp}|,|M_{rp}|)\le \delta \|M\|_1$, then lift the pivot so that it has magnitude $\delta\|M\|_1$ and select it.
\item Otherwise, if $|M_{pp}|\ge\alpha\gamma_p$, then select pivot $p$.
\item Otherwise, if $|M_{pp}|\ge\alpha\gamma_p^2/\gamma_r$, then select pivot $p$.
\item Otherwise, if $|M_{rr}|\ge\alpha\gamma_r$, then select pivot $r$.
\item Otherwise, select the $2\times2$ pivot $(p,r)$.
\end{itemize}
$\alpha$ is a parameter that is often set to $(\sqrt{17}+1)/8$, but other values can be used as well. $\delta$ is a parameter that controls the amount of perturbation introduced and is of the order of magnitude of machine precision.

Notice that the procedure described above requires swapping columns of the matrix and potentially accepting $2\times2$ pivots, so that matrix $D$ is no longer diagonal, but only block diagonal. Notice also that this procedure only applies to the factorisation kernel used to factorise the diagonal block ($B_D$ in Figure~\ref{fig:blocks}); this means that only the portion of pivotal columns within $B_D$ are considered, while the entries in $B_P$ are not used during pivoting.

We found that it is beneficial to perform an additional swap of columns before the pivoting procedure commences, to put the pivot with largest magnitude in position $M_{pp}$. This guarantees that if a strong candidate is present, then it will be selected. The extra swaps that are potentially added are balanced by the extra accuracy that can be obtained with a good pivot choice. 

Despite knowing that Bunch-Kaufman pivoting is not a robust pivoting strategy in general \cite{AshGriLew:pivoting}, we found that this technique, combined with the dynamic regularisation described later, is sufficiently robust for the vast majority of problems, while having a low cost. More complicated pivoting strategies would require more expensive operations, since they need to have access to the whole pivotal column rather than just the portion within the diagonal block $B_D$.

Looking at \eqref{eq:regul_matrix}, the perturbation performed during pivoting corresponds to the dynamical regularisation part of matrices $R_p$ and $R_d$. In order to improve the numerical properties of the highly ill-conditioned IPM matrices, a static regularisation term is also applied to all pivots regardless of their magnitude. This perturbation is in the range $10^{-10}-10^{-12}$, and it is applied right after the pivot has been computed, rather than when the matrix is formed. In this way, the effect of numerical cancellation is reduced. Take for example a diagonal entry of magnitude $10^{10}$ in the original matrix; if static regularisation is applied at the beginning, then it would completely disappear. However, if the diagonal entry turns out to become a very small pivot, say of order $10^{-14}$, then the addition of static regularisation would have been very beneficial. By applying it after the pivot has been computed, we are sure to obtain the desired effect.

Despite the pivoting procedure and the perturbation, the matrices involved are so ill conditioned that occasionally an iterative refinement procedure still needs to be applied to obtain a good quality solution.

Suppose that the linear system to be solved is $M\dvec=\fvec$, which can correspond to either \eqref{eq:augsys} or \eqref{eq:normeq}. After computing $\dvec = M^{-1}\fvec$ using the $LDL^T$ factorisation, the following iterative procedure is repeated: 
\begin{itemize}
\item Compute the residual $\rvec=\fvec-M\dvec$.
\item Compute the correction $\tilde\dvec = M^{-1}\rvec$.
\item Update $\dvec\leftarrow\dvec+\tilde\dvec$.
\end{itemize}
This process is repeated until the residual is small enough, or until the maximum number of iterations is reached (usually 1 or 2). To evaluate how small the residual is, we use the strategy described in \cite{AriDemDuf:error}, which computes the component-wise backward error
\[\frac{|M\dvec-\fvec|_i}{\Large(|M||\dvec|+|\fvec|\Large)_i}\]
and substitutes small denominators with $\Large(|M||\dvec|+\|\dvec\|_\infty|M|\evec\Large)_i$, where the absolute value of a matrix or vector corresponds to the same matrix or vector with all entries taken in absolute value.

\subsection{Quasi-definiteness and signs of the pivots}
Adding the regularisation to the augmented system as in \eqref{eq:regul_matrix} makes the matrix quasi-definite, since the $(1,1)-$block is strictly negative definite and the $(2,2)-$block is strictly positive definite. Quasi-definiteness implies that the matrix is strongly factorisable \cite{Van:quasidefinite}, i.e., any pivot order produces a stable factorisation (assuming that exact arithmetic is used). Equivalently, after elimination of any pivot, the remaining Schur complement still remains quasi-definite. For completeness, we report a proof of the last statement below.

The presence of round-off errors may however affect the quasi-definiteness of the matrix and produce a numerically unstable factorisation. Whenever a pivot is regularised, as part of the pivoting strategy described in the previous section, its value must be large enough to guarantee that the factorisation is sufficiently stable (this can be interpreted as choosing the value $\delta$ of the pivoting strategy). This section describes some crucial aspects to consider when performing regularisation, and a simple yet powerful heuristic that can be used in practice.

Consider a quasi-definite matrix during the process of elimination. At some stage, pivot $p$ needs to be eliminated. Without loss of generality, suppose that the remaining rows and columns are permuted so that the following assumption holds:

\begin{assumption}
\label{assum:structure}
The matrix has the following structure
\[
\begin{bmatrix}
p & \qvec_1^T & \qvec_2^T \\
\qvec_1 & M_{11} & M_{21}^T \\
\qvec_2 & M_{21} & M_{22}
\end{bmatrix}
\]
where $p<0$, $\qvec_1\in\mathbb R^{n_1}$, $\qvec_2\in\mathbb R^{n_2}$, $M_{11}\in\mathbb R^{n_1\times n_1}$, $M_{21}\in\mathbb R^{n_2\times n_1}$, $M_{22}\in\mathbb R^{n_2\times n_2}$, $M_{11}\prec 0$, $M_{22}\succ0$.
\end{assumption}

To guarantee that the matrix is quasi-definite, we assume also the following:
\begin{assumption}
\label{assum:quasi}
\[\begin{bmatrix}
p & \qvec_1^T \\ \qvec_1 & M_{11}
\end{bmatrix}\prec0.
\]
\end{assumption}

We consider only the case of a negative pivot, similar arguments can be used when the pivot $p>0$ originates from the $(2,2)-$block.

\begin{proposition}
If Assumptions \ref{assum:structure} and \ref{assum:quasi} hold, then, after eliminating pivot $p$, the remaining Schur complement is quasi-definite.
\end{proposition}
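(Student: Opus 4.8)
The plan is to write the Schur complement explicitly and then verify, directly, the two defining properties of quasi-definiteness; only one of them requires Assumption~\ref{assum:quasi}, the other being essentially immediate from the sign of $p$. First I would record that eliminating the pivot $p$ from the matrix of Assumption~\ref{assum:structure} leaves the Schur complement
\[
S = \begin{bmatrix} M_{11} & M_{21}^T \\ M_{21} & M_{22}\end{bmatrix} - \frac{1}{p}\begin{bmatrix}\qvec_1 \\ \qvec_2\end{bmatrix}\begin{bmatrix}\qvec_1^T & \qvec_2^T\end{bmatrix} = \begin{bmatrix} M_{11} - \tfrac{1}{p}\qvec_1\qvec_1^T & M_{21}^T - \tfrac{1}{p}\qvec_1\qvec_2^T \\ M_{21} - \tfrac{1}{p}\qvec_2\qvec_1^T & M_{22} - \tfrac{1}{p}\qvec_2\qvec_2^T\end{bmatrix}.
\]
By definition $S$ is quasi-definite precisely when its $(1,1)$-block is negative definite and its $(2,2)$-block is positive definite; the off-diagonal block plays no role, so the whole argument reduces to two statements about symmetric rank-one updates.

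For the $(2,2)$-block, since $p<0$ we have $-1/p>0$, so $-\tfrac{1}{p}\qvec_2\qvec_2^T$ is positive semidefinite; adding it to $M_{22}\succ0$ keeps the sum positive definite, hence $M_{22} - \tfrac{1}{p}\qvec_2\qvec_2^T \succ 0$. This step is trivial and uses only the sign of $p$ and Assumption~\ref{assum:structure}.

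The $(1,1)$-block is the only real content, and it is where Assumption~\ref{assum:quasi} enters. I would prove $M_{11} - \tfrac{1}{p}\qvec_1\qvec_1^T \prec 0$ directly: fix any $\vvec\in\mathbb R^{n_1}$ with $\vvec\ne\0$, and evaluate the quadratic form of the matrix of Assumption~\ref{assum:quasi} at $(t,\vvec)$, namely $pt^2 + 2t\,\qvec_1^T\vvec + \vvec^T M_{11}\vvec$, which is strictly negative for every scalar $t$ since $(t,\vvec)\ne\0$. Because $p<0$ this is a downward parabola in $t$, maximised at $t^\star = -\qvec_1^T\vvec/p$ with maximal value $-(\qvec_1^T\vvec)^2/p$; substituting $t=t^\star$ yields $\vvec^T M_{11}\vvec - \tfrac{1}{p}(\qvec_1^T\vvec)^2 < 0$, which is exactly $\vvec^T\bigl(M_{11} - \tfrac{1}{p}\qvec_1\qvec_1^T\bigr)\vvec < 0$. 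Equivalently, one may invoke the standard Schur-complement characterisation of definiteness: $\begin{bmatrix} p & \qvec_1^T \\ \qvec_1 & M_{11}\end{bmatrix}\prec0$ together with $p<0$ forces its Schur complement $M_{11} - \tfrac{1}{p}\qvec_1\qvec_1^T$ to be negative definite. Combining the two blocks shows $S$ is quasi-definite.

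The main obstacle, modest as it is, is the sign bookkeeping in the $(1,1)$-block: a rank-one term with a \emph{positive} coefficient $-1/p$ is added to a negative definite matrix, so negative definiteness could a priori be destroyed, and it is precisely Assumption~\ref{assum:quasi} that rules this out. The case $p>0$ originating from the $(2,2)$-block is handled identically, after interchanging the roles of the two blocks and reversing signs, as already noted before the statement.
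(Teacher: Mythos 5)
Your proof is correct and follows essentially the same route as the paper: the same explicit Schur complement, the same sign argument for the $(2,2)$-block, and the same choice $t^\star=-\qvec_1^T\vvec/p$ (the paper phrases it as completing the square in $\alpha$, which is the identical computation) to extract negative definiteness of the $(1,1)$-block from Assumption~\ref{assum:quasi}. No gaps; nothing further to add.
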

\begin{proof}
Eliminating pivot $p$ leaves a matrix of size $n_1+n_2$ to be processed in the next steps. The $(1,1)-$block of the remaining matrix is $M_{11}-\frac{1}{p}\qvec_1\qvec_1^T$; the $(2,2)-$ block reads $M_{22}-\frac{1}{p}\qvec_2\qvec_2^T$. Since $p<0$ and $M_{22}\succ0$, the $(2,2)-$block is strictly positive definite. 

Consider now Assumption~\ref{assum:quasi}, i.e.,
\[
\begin{bmatrix}
\alpha & \xvec^T
\end{bmatrix}
\begin{bmatrix}
p & \qvec_1^T \\ \qvec_1 & M_{11}
\end{bmatrix}
\begin{bmatrix}
\alpha \\ \xvec
\end{bmatrix}
<0
\]
for any $\alpha\in\mathbb R$, $\xvec\in\mathbb R^{n_1}$. This is equivalent to 
\begin{align}
0>\,\,& \alpha^2p+2\alpha \qvec_1^T\xvec +\xvec^TM_{11}\xvec \notag\\
=\,\,& \xvec^TM_{11}\xvec-\frac{1}{p}(\qvec_1^T\xvec)^2+\frac{1}{p}\Big[(\qvec_1^T\xvec)^2+2\alpha p \qvec_1^T\xvec+\alpha^2 p^2\Big] \notag\\
=\,\,& \xvec^T\Big(M_{11}-\frac{1}{p}\qvec_1\qvec_1^T\Big)\xvec+\frac{1}{p}\Big(\qvec_1^T\xvec+\alpha p\Big)^2 \notag.
\end{align}
Choosing $\alpha=-\qvec_1^T\xvec/p$ yields that $M_{11}-\frac{1}{p}\qvec_1\qvec_1^T$ is negative definite. Therefore, after eliminating pivot $p$, the remaining Schur complement has a negative definite $(1,1)-$block and a positive definite $(2,2)-$block.
\end{proof}

Therefore, the elimination of a pivot from a quasi-definite matrix leaves behind another quasi-definite matrix. This guarantees that the whole factorisation can proceed without breaking down, regardless of the order of the pivots. It also implies that the diagonal entries of the matrix maintain always their original sign. See \cite{Van:quasidefinite} for a more complete analysis of the factorisation of quasi-definite matrices.

Suppose now that, due to round-off errors, the pivot that is actually used in practice is $\tilde p$, which is still negative, but of smaller magnitude than the correct pivot $\bar p$. We can no longer be sure that the matrix at the current stage is quasi-definite. However, we assume that, if the correct pivot $\bar p$ was used, the matrix would be quasi-definite, i.e., $0>\alpha^2\bar p+2\alpha \qvec_1^T\xvec +\xvec^TM_{11}\xvec$ holds for any $\alpha$ and $\xvec\in\mathbb R^{n_1}$.

The proposition below sheds light on potential consequences of the use of perturbed pivot $\tilde p$ (which may have 
accumulated round-off errors) instead of the original one $\bar p$. We assume that the pivot originates from the negative definite part of matrix hence $\bar p < 0$ and the perturbed value of it $\tilde p$ may get dangerously close to zero.

\begin{proposition}
Given the structure in Assumption~\ref{assum:structure} and assuming that Assumption~\ref{assum:quasi} holds with pivot $\bar p$, given a perturbed pivot $\tilde p$ such that $\bar p<\tilde p<0$, then the following holds $\forall\xvec\in\mathbb R^{n_1}$
\[\xvec^T\Big(M_{11}-\frac{1}{\tilde p}\qvec_1\qvec_1^T\Big)\xvec
+\frac{\bar p-\tilde p}{\bar p\tilde p}(\qvec_1^T\xvec)^2 < 0.\]
\end{proposition}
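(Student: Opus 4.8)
The plan is to recognise that the claimed inequality is, after an elementary rearrangement of the coefficient of $(\qvec_1^T\xvec)^2$, nothing but the statement that the \emph{exact} Schur complement $M_{11}-\frac{1}{\bar p}\qvec_1\qvec_1^T$ is negative definite, which was already established inside the proof of the first Proposition from Assumption~\ref{assum:quasi}.

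First I would record the consequence of Assumption~\ref{assum:quasi} written with the correct pivot $\bar p$: repeating the completion of the square used earlier (that is, taking $\alpha=-\qvec_1^T\xvec/\bar p$ in the inequality $0>\alpha^2\bar p+2\alpha\qvec_1^T\xvec+\xvec^TM_{11}\xvec$) gives
\[\xvec^T\Big(M_{11}-\tfrac{1}{\bar p}\qvec_1\qvec_1^T\Big)\xvec<0\qquad\text{for every }\xvec\neq\0.\]
Next I would perform the scalar identity on the coefficients: since
\[-\frac{1}{\tilde p}+\frac{\bar p-\tilde p}{\bar p\tilde p}=\frac{-\bar p+\bar p-\tilde p}{\bar p\tilde p}=-\frac{1}{\bar p},\]
the left-hand side of the claim equals
\[\xvec^TM_{11}\xvec-\frac{1}{\tilde p}(\qvec_1^T\xvec)^2+\frac{\bar p-\tilde p}{\bar p\tilde p}(\qvec_1^T\xvec)^2=\xvec^TM_{11}\xvec-\frac{1}{\bar p}(\qvec_1^T\xvec)^2=\xvec^T\Big(M_{11}-\tfrac{1}{\bar p}\qvec_1\qvec_1^T\Big)\xvec,\]
which is $<0$ by the previous step. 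This closes the argument for $\xvec\neq\0$ (for $\xvec=\0$ the whole expression vanishes, and the statement is to be read as in the earlier Proposition, i.e.\ for nonzero $\xvec$).

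There is no substantial obstacle: the proof reduces to a one-line identity combined with the Schur complement negativity already proved. The two points that deserve a remark, rather than genuine difficulty, are: (i) the conclusion, exactly like the earlier one, holds for nonzero $\xvec$ only; and (ii) the sign hypothesis $\bar p<\tilde p<0$ is not needed for the inequality itself but fixes the meaning of the extra term, namely $\bar p-\tilde p<0$ and $\bar p\tilde p>0$, so the correction $\frac{\bar p-\tilde p}{\bar p\tilde p}(\qvec_1^T\xvec)^2$ is nonpositive. This is the informative content: writing $M_{11}-\frac{1}{\tilde p}\qvec_1\qvec_1^T=\big(M_{11}-\frac{1}{\bar p}\qvec_1\qvec_1^T\big)+\frac{\tilde p-\bar p}{\bar p\tilde p}\qvec_1\qvec_1^T$, the perturbed Schur complement equals a negative definite matrix plus the rank-one positive semidefinite term $\frac{\tilde p-\bar p}{\bar p\tilde p}\qvec_1\qvec_1^T$, so it need not be negative definite, but it fails to be so by a controlled, rank-one amount — a quantitative measure of how a round-off error in the pivot can spoil quasi-definiteness.
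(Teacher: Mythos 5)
Your proof is correct and follows essentially the same route as the paper: both arguments reduce the claim to the negative definiteness of the exact Schur complement $M_{11}-\frac{1}{\bar p}\qvec_1\qvec_1^T$ obtained from Assumption~\ref{assum:quasi} via the choice $\alpha=-\qvec_1^T\xvec/\bar p$, the paper doing the add-and-subtract of $\frac{1}{\tilde p}(\qvec_1^T\xvec)^2$ inline while you package the same algebra as the scalar identity $-\frac{1}{\tilde p}+\frac{\bar p-\tilde p}{\bar p\tilde p}=-\frac{1}{\bar p}$. Your remark that the strict inequality should be read for $\xvec\neq\0$ applies equally to the paper's statement and is a fair (minor) clarification.
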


\begin{proof}
We start by the assumption that the matrix would be quasi-definite, if the unperturbed pivot was used. Then:
\begin{align}
0>\,\,& \alpha^2\bar p+2\alpha \qvec_1^T\xvec +\xvec^TM_{11}\xvec \notag\\
=\,\,& \xvec^TM_{11}\xvec-\frac{1}{\bar p}(\qvec_1^T\xvec)^2+\frac{1}{\bar p}\Big[(\qvec_1^T\xvec)^2+2\alpha \bar p \qvec_1^T\xvec+\alpha^2 \bar p^2\Big] \notag\\
=\,\,& \xvec^TM_{11}\xvec
-\frac{1}{\tilde p}(\qvec_1^T\xvec)^2
+\frac{1}{\tilde p}(\qvec_1^T\xvec)^2
-\frac{1}{\bar p}(\qvec_1^T\xvec)^2
+\frac{1}{\bar p}\Big[(\qvec_1^T\xvec)^2+2\alpha \bar p \qvec_1^T\xvec+\alpha^2 \bar p^2\Big] \notag\\
=\,\,& \xvec^T\Big(M_{11}-\frac{1}{\tilde p}\qvec_1\qvec_1^T\Big)\xvec
+(\qvec_1^T\xvec)^2\frac{\bar p-\tilde p}{\bar p\tilde p}
+\frac{1}{\bar p}\Big(\qvec_1^T\xvec+\alpha \bar p\Big)^2 \notag.
\end{align}
Choosing $\alpha=-\qvec_1^T\xvec/\bar p$ we obtain the thesis.
\end{proof}
Given that $\bar p$ and $\tilde p$ have the same sign and $\bar p<\tilde p$, the quantity $\frac{\bar p-\tilde p}{\bar p\tilde p}$ is strictly negative. Therefore, the Schur complement remaining after eliminating pivot $\tilde p$ may or may not be negative definite. If $\tilde p$ becomes arbitrarily close to zero, then $\frac{\bar p-\tilde p}{\bar p\tilde p}$ becomes arbitrarily large in magnitude; in turn, this means that the quantity $\xvec^T\big(M_{11}-\frac{1}{\tilde p}\qvec_1\qvec_1^T\big)\xvec$ can potentially attain arbitrarily large positive values, without violating the inequality. By allowing a pivot that is too close to zero, we lose control of the state of the matrix after the elimination of such pivot. 

If instead the pivot changes sign and becomes positive, then the $(1,1)-$block of the Schur complement is guaranteed to be negative definite, but the $(2,2)-$block may not be positive definite any more. This means that the issue is not contained within the negative definite part of the matrix any more, but it has spread to the positive definite part as well. Allowing a pivot to change sign can therefore amplify the issues related to small pivots. We therefore consider it crucial to prevent round-off errors from switching the signs of pivots. 

It is particularly difficult to assess whether, after the elimination of a given pivot, the remaining Schur complement is still quasi-definite. It is much simpler to determine whether the elimination of a given pivot is going to switch the signs of the diagonal entries of the Schur complement.
Despite the latter condition being weaker than the former, we argue that monitoring only the signs of the diagonal entries of the Schur complement produces a factorisation that is stable enough to be used within the interior point iterations. The next proposition formalises this heuristic.

\begin{proposition}
\label{prop:regularise}
Given the structure in Assumption~\ref{assum:structure}, if $p<0$ and
\[
|p|\ge\max_j \frac{(\qvec_1)_j^2}{-(M_{11})_{jj}}
\]
then the diagonal entries of the Schur complement after eliminating pivot $p$ have the correct signs.
\end{proposition}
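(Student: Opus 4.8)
The plan is to write down the diagonal entries of the Schur complement explicitly and check their signs one at a time. After eliminating the pivot $p<0$, the remaining matrix is
\[
\begin{bmatrix}
M_{11} & M_{21}^T \\ M_{21} & M_{22}
\end{bmatrix}
-\frac{1}{p}
\begin{bmatrix}
\qvec_1 \\ \qvec_2
\end{bmatrix}
\begin{bmatrix}
\qvec_1^T & \qvec_2^T
\end{bmatrix}.
\]
By Assumption~\ref{assum:structure} the original $(1,1)$-block satisfies $M_{11}\prec0$, so $(M_{11})_{jj}<0$ for every $j$, and similarly $(M_{22})_{jj}>0$ for every $j$. I want to show that the rank-one update does not flip any of these signs under the stated hypothesis.

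First I would handle the $(2,2)$-block, which is the easy case: its $j$-th diagonal entry becomes $(M_{22})_{jj}-\tfrac{1}{p}(\qvec_2)_j^2$. Since $p<0$ we have $-\tfrac{1}{p}>0$, so this is a sum of two nonnegative quantities with $(M_{22})_{jj}>0$ strictly; hence it stays strictly positive, with no condition needed. Next I would turn to the $(1,1)$-block, whose $j$-th diagonal entry is $(M_{11})_{jj}-\tfrac{1}{p}(\qvec_1)_j^2$. Here $-\tfrac{1}{p}(\qvec_1)_j^2\ge0$ is working against the negative $(M_{11})_{jj}$, so I need the negative part to dominate: I want $(M_{11})_{jj}-\tfrac{1}{p}(\qvec_1)_j^2<0$, i.e. $-\tfrac{1}{p}(\qvec_1)_j^2<-(M_{11})_{jj}$, i.e. (using $-(M_{11})_{jj}>0$ and $|p|=-p$) $\tfrac{(\qvec_1)_j^2}{|p|}\le -(M_{11})_{jj}$, that is $|p|\ge \tfrac{(\qvec_1)_j^2}{-(M_{11})_{jj}}$. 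Taking the maximum over $j$ gives exactly the hypothesis $|p|\ge\max_j \tfrac{(\qvec_1)_j^2}{-(M_{11})_{jj}}$, and this guarantees every $(1,1)$-diagonal entry stays (weakly) negative — one can note that if the inequality is strict for all $j$ then strictly negative, and in any case the sign is not switched to positive. Combining the two blocks completes the argument.

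There is really no serious obstacle here: the result is a one-line-per-entry sign check, and the only care needed is bookkeeping the sign of $1/p$ and the direction of the inequality when dividing by the negative quantity $(M_{11})_{jj}$. The mild subtlety worth a remark is that, unlike Assumption~\ref{assum:quasi}, this condition only controls the \emph{diagonal} of the Schur complement, not its definiteness — which is precisely the point made in the surrounding text: it is a cheap, checkable surrogate (it depends only on diagonal entries of $M_{11}$ and the pivot column $\qvec_1$, all readily available during the factorisation) for the genuine but hard-to-verify quasi-definiteness condition. I would close the proof with that observation so it connects back to the heuristic being proposed.
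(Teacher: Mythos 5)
Your proposal is correct and follows essentially the same route as the paper: check the diagonal entries of the two blocks of the Schur complement directly, note that the $(2,2)$-block entries stay positive automatically because $p<0$, and rearrange the sign condition $(M_{11})_{jj}-\tfrac{1}{p}(\qvec_1)_j^2<0$ into the stated bound on $|p|$. Your remark about the weak versus strict inequality at equality in the bound is a minor point of care that the paper's own proof glosses over in the same way, so there is nothing to fix.
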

\begin{proof}
Since $p$ is negative, the $(2,2)-$block of the Schur complement is positive definite and therefore its diagonal entries are all positive. A generic diagonal entry of the $(1,1)-$block of the remaining matrix reads
\[(M_{11})_{jj} - \frac{1}{p}(\qvec_1)_j^2.\]

To maintain the correct sign, these entries need to be negative after the elimination. Considering that $p<0$, this is equivalent to
\[(M_{11})_{jj} + \frac{1}{|p|}(\qvec_1)_j^2<0, \,\,\forall j.\]
Rearranging the terms gives the thesis.
\end{proof}
Therefore, if the pivot is regularised so that its magnitude is not smaller than the bound given in Proposition~\ref{prop:regularise}, then all the diagonal entries of the Schur complement have the correct sign. This does not necessarily guarantee that the remaining matrix is quasi-definite, but it gives some minimum reassurance that the elimination process will continue without breaking down.

If the regularisation required according to Proposition~\ref{prop:regularise} is too large, this may be an indication that the factorisation process went catastrophically wrong. In such cases, it may be beneficial to restart the whole factorisation adding a larger static regularisation, or even discard the interior point iteration altogether and perform different algorithmic choices, for example to obtain a better centred iterate.

Notice that, in practice, this regularisation technique can be applied only within the diagonal block $D_B$, since the full pivotal column is not known during the factorisation of the block. This means that some diagonal entries not included in $D_B$ may still change sign. Other heuristics can be applied to detect and correct this behaviour. 

Notice also that many strategies could be applied to prevent pivots from changing signs, for example full pivoting. However, such techniques would be much more expensive and would require changes to the data structures needed for the factorisation. The heuristic presented here requires only few extra operations to be computed each time a pivot is selected. Private communication with developers of commercial solvers confirmed that full pivoting is usually not applied in the context of interior point methods; regularisation strategies and practical heuristics like the ones described in this section are employed instead.

\subsection{Cost of solves}
\label{sec:correctors}

Performing iterative refinement requires more solves, since each correction requires a solution with the factorisation $LDL^T$. Although the cost of solves is usually smaller than the cost of the factorisation, performing too many solves per factorisation can become expensive. For this reason, the number of correctors used during the IPM should be chosen based on the properties of the problem. For a given number of correctors $k$, there are $1+k$ directions that need to be computed, the predictor and $k$ correctors. Each of these directions requires the initial solve and up to $f$ refinement steps (typical values are $f=1$ or $2$).
 Since the refinement often stops before reaching the maximum number of steps, the number of solves per IPM iteration can be estimated to be roughly $(1+k)(1+f/2)$. It is not possible to know how much improvement a single corrector brings to the optimization solver; a simple heuristic to choose $k$ is to balance the effort needed for the factorisation and for the solves, i.e., to guarantee that the time spent doing solves is not larger than the time spent doing the factorisation. 
 The effort for a single solve $E_s$ is proportional to two times the number of nonzero entries in $L$, since each solve requires a forward and backward pass through $L$. The effort for the factorisation $E_f$ depends on the number of floating point operations needed (this is computed during the symbolic analysis) and on the number of sparse indexing operations. The relative cost of these two operations need to be estimated empirically, as it depends on many variables and different features of the computer architecture used. The factorisation effort should also be scaled by a coefficient $\alpha_F$, to take into account the higher efficiency of this phase compared to solve, due to parallelism and the use of higher level BLAS. To summarise, we can estimate the number of correctors $k$ using
 \[(1+k)(1+f/2) E_s \approx E_f  \alpha_F. \]
The resulting number $k$ should then be lifted to be at least $1$, since the first corrector is always beneficial. Additionally, a maximum number of correctors may be set.

Unfortunately, the multiple centrality correctors \cite{ColGon:mcc} are serial in nature, since the right-hand side for a corrector depends on the point reached with the previous one. There is no easy way to overcome this drawback and compute equally successful correctors which could benefit from parallelism.

%\section{Features of the solver}
%\label{sec:features}
%\subsection{Factorisation}
%The elimination tree (see e.g. \cite{Dav:direct}) is explored in parallel to exploit tree parallelism. Each supernode is associated with a task. The supernode spawns children tasks corresponding to its dependencies in the tree. Until these dependencies are completed, the supernode cannot proceed with the factorisation of its columns. The leaf nodes in the tree do not have dependencies and are therefore the first ones to complete; slowly, more and more supernodes become ready to be factorised, until all the roots of the tree have been processed. 

%This way of exploring the tree has the disadvantage of having to allocate and deallocate memory many times. The standard approach of using a stack to store the Schur complements cannot be used, leading to a large number of system calls to obtain the memory and potentially to a higher fragmentation. An alternative approach, which splits the tree into branches, rather than into single supernodes, and which allows to use the stack-based approach to store the Schur complements will be available in a future version of the code. Notice that there are many other considerations related to the ordering of children within the tree and to the exact moment when the memory allocation actually happens, which are too complicated to be discussed here, see \cite{GueLex:multifrontal}.

\section{Results}
\label{sec:results}

We present computational results of applying HiPO to solve problems from three test sets: 
\begin{itemize}
\item The Netlib collection \cite{netlib} consists of 98 linear programs of small to medium size. These problems are commonly used to assess the robustness of LP solvers.
\item The Mittelmann collection \cite{mittelmann} consists of 49 linear programs of medium to large size. These problems are commonly used to benchmark commercial and open source solvers.
\item A collection of 45 medium size linear programs coming from energy systems modelling using PyPSA-Eur \cite{HorHofSchBro:pypsa}. These problems are particularly interesting, since much of the interest in HiGHS comes from the open-source energy modelling community. These instances can be found at \cite{openenergy}.
\end{itemize}

The experiments are run on a 3.2GHz AMD EPYC 7262 processor with 8 cores, 16 threads, 128GB of memory, running Linux.
The code is written in \cpp 11, built using CMake and compiled with Clang 14.0. The BLAS library used is OpenBLAS 0.3.29. Shared-memory parallelism is obtained using the HiGHS parallel scheduler, which exploits the \cpp multi-threading environment, and uses half of the available threads, 8 in this case (this is default behaviour of HiGHS to avoid excessive loads on the machine). OpenBLAS is run in serial mode, since running it in parallel produces non-deterministic results, and since the parallelisation of matrix-matrix products is already performed as part of the node-level parallelism.

All the problems are presolved using HiGHS before being passed to HiPO. Appendix~\ref{apx:size} shows the size of the Mittelmann and Pypsa problems, before and after presolve, in terms of rows, columns and number of nonzero entries of the constraint matrix. We do not report the size of the Netlib problems, but they can be found in \cite{netlib}.
All problems are solved with accuracy $\tau_\text{feas} = \tau_\text{opt} = 10^{-8}$. Only the time taken by the interior point solver is reported, without including the time to read and presolve the problem. While these times can be large sometimes, they are exactly the same for the two solvers that are compared, and they are not the subject of interest of this paper.

If the solver fails to deliver the required accuracy using the direct factorisation, then the optimization can be continued using the basis preconditioner of IPX, which usually yields very accurate Newton directions. If crossover is needed, then IPX can be used to perform that as well. For the sake of space, only results without crossover are shown; results on the crossover of IPX can be found in \cite{SchGon:basisIPM}.

Notice that each problem uses a different number of correctors, as described in Section~\ref{sec:correctors}. Therefore, the number of IPM iterations can vary substantially among different problems.

As mentioned before, the code is deterministic, so multiple runs with the same input produce the same output. Changing the number of threads also does not affect the output of the solver; however, running the code with the exact same configuration but on different machines can lead to different results, due to the potentially different optimizations performed by the compiler. Linking with a different BLAS library also has a strong impact on the results that are produced.

The code is available under MIT license as part of HiGHS at \url{github.com/ERGO-Code/HiGHS}; to be built successfully, it requires Metis 5 from \url{github.com/KarypisLab/METIS}, GKlib from \url{github.com/KarypisLab/GKlib}, and a BLAS library.

\subsection{Netlib results}
The results of HiPO performance on the Netlib collection are reported in Table~\ref{tab:netlib_results}.

Notice that the column of IPM iterations reports in parentheses the number of iterations that were performed at the end using the basis preconditioner to guarantee reaching the required accuracy. 
For most of these problems, the normal equations approach is selected. The exceptions are \texttt{fit1p}, \texttt{fit2p}, \texttt{israel}, \texttt{modszk1}, \texttt{sc205}, \texttt{stocfor2}, \texttt{stocfor3}. This choice is made automatically using a heuristic technique.

The problems from Netlib are rather small for today's computers and HiPO does not display any significant advantage over IPX. Notice that the whole collection of 98 problems can be read from file, presolved and solved to optimality in under 40 seconds. Therefore, these results should be considered only as evidence that the solver is robust. The other two collections of problems are more challenging and the two solvers compared display noticeably different performance.

\begin{table}
\tiny
\caption{Results on the 98 Netlib problems. The column ``Iter'' shows also in parentheses the number of IPX iterations performed at the end (if any).}
\label{tab:netlib_results}
\centering
\begin{tabular}{l|rr||l|rr||l|rr}
\toprule
Problem & Iter & Time & Problem & Iter & Time & Problem & Iter & Time \\
\midrule
25fv47 & 29 & 0.124 & ganges & 14 & 0.024 & scfxm3 & 29 & 0.084 \\
80bau3b & 38 & 0.347 & gfrd & 14 & 0.016 & scorpion & 13 & 0.003 \\
adlittle & 12 & 0.003 & greenbea & 31 & 0.403 & scrs8 & 19 & 0.014 \\
afiro & 8 & 0.000 & greenbeb & 34 & 0.250 & scsd1 & 10 & 0.007 \\
agg & 19 & 0.013 & grow15 & 20 & 0.029 & scsd6 & 13 & 0.015 \\
agg2 & 19 & 0.028 & grow22 & 22 & 0.051 & scsd8 & 12 & 0.031 \\
agg3 & 19 & 0.028 & grow7 & 17 & 0.012 & sctap1 & 20 & 0.020 \\
bandm & 18 & 0.015 & israel & 23 & 0.037 & sctap2 & 18 & 0.070 \\
beaconfd & 9 & 0.000 & kb2 & 13 & 0.002 & sctap3 & 19 & 0.102 \\
blend & 10 & 0.002 & lotfi & 18 & 0.008 & seba & 8 & 0.000 \\
bnl1 & 41 & 0.093 & maros-r7 & 68 & 5.541 & share1b & 20 & 0.007 \\
bnl2 & 29 & 0.182 & maros & 22 & 0.050 & share2b & 12 & 0.004 \\
boeing1 & 24 & 0.035 & modszk1 & 23(3) & 0.153 & shell & 21 & 0.023 \\
boeing2 & 16 & 0.010 & nesm & 31 & 0.103 & ship04l & 16 & 0.026 \\
bore3d & 14 & 0.002 & perold & 28 & 0.090 & ship04s & 21 & 0.021 \\
brandy & 21 & 0.010 & pilot.ja & 29 & 0.181 & ship08l & 16 & 0.043 \\
capri & 26 & 0.022 & pilot & 34 & 0.738 & ship08s & 15 & 0.021 \\
cycle & 26 & 0.139 & pilot.we & 31 & 0.098 & ship12l & 15 & 0.054 \\
czprob & 23 & 0.046 & pilot4 & 31 & 0.065 & ship12s & 15 & 0.023 \\
d2q06c & 30 & 0.368 & pilot87 & 43 & 2.169 & sierra & 17 & 0.058 \\
d6cube & 20 & 0.170 & pilotnov & 18 & 0.103 & stair & 13 & 0.035 \\
degen2 & 17 & 0.053 & qap8 & 10 & 0.116 & standata & 16 & 0.012 \\
degen3 & 39(1) & 0.822 & qap12 & 16 & 3.341 & standgub & 16 & 0.012 \\
dfl001 & 61 & 4.233 & qap15 & 18 & 10.291 & standmps & 24 & 0.029 \\
e226 & 18 & 0.019 & recipe & 10 & 0.001 & stocfor1 & 12 & 0.003 \\
etamacro & 20 & 0.030 & sc105 & 12 & 0.002 & stocfor2 & 20 & 0.264 \\
fffff800 & 30 & 0.055 & sc205 & 13 & 0.007 & stocfor3 & 29 & 3.355 \\
finnis & 22 & 0.030 & sc50a & 12 & 0.001 & tuff & 22 & 0.023 \\
fit1d & 17 & 0.016 & sc50b & 9 & 0.001 & truss & 17 & 0.165 \\
fit1p & 15 & 0.112 & scagr25 & 26 & 0.020 & vtp.base & 12 & 0.001 \\
fit2d & 37 & 0.242 & scagr7 & 18 & 0.003 & wood1p & 24 & 0.096 \\
fit2p & 24 & 1.181 & scfxm1 & 20 & 0.020 & woodw & 41 & 0.167 \\
forplan & 27(2) & 0.021 & scfxm2 & 23 & 0.047 & & & \\
\bottomrule
\end{tabular}
\end{table}

\subsection{Results for PyPSA problems}
The results on the collection of PyPSA problems are reported in Table~\ref{tab:pypsa_results}. For the sake of space, the name of the problems is shortened;  the full name of the problems, as well as their size, are reported in Table~\ref{tab:problem_data}.

For small instances, IPX is clearly the winner; however, for larger sizes, HiPO outperforms IPX by one order of magnitude. Notice that for all these problems, the augmented system was chosen (and indeed it outperformed the normal equations approach).
The backup of using the basis preconditioner at the end was triggered only in two cases and it required a single IPX iteration.

\begin{table}
\tiny
\caption{Results on the 45 PyPSA problems. The column ``Iter'' for HiPO shows also in parentheses the number of IPX iterations performed at the end (if any).}
\label{tab:pypsa_results}
\centering
\begin{tabular}{l|rr|rr||l|rr|rr}
\toprule
& \multicolumn{2}{c|}{IPX} & \multicolumn{2}{c||}{HiPO} & & \multicolumn{2}{c|}{IPX} & \multicolumn{2}{c}{HiPO} \\
\midrule
Problem & Iter & Time & Iter & Time & Problem & Iter & Time & Iter & Time\\
\midrule
trex-2-24h & 41 & 3.1 & 33 & 10.4 & trex-10-24h & 56 & 61.8 & 55 & 48.9 \\
op-2-24h & 38 & 2.5 & 31 & 7.5 & op-10-24h & 59 & 61.9 & 59 & 52.9 \\
trex-3-24h & 46 & 7.2 & 42 & 14.4 & op-5-12h & 54 & 64.4 & 55 & 52.1 \\
op-3-24h & 42 & 6.0 & 38 & 12.7 & trex-5-12h & 64 & 64.7 & 60 & 65.0 \\
trex-2-12h & 48 & 10.9 & 35 & 14.7 & trex-6-12h & 67 & 96.7 & 67 & 78.4 \\
op-2-12h & 40 & 8.9 & 32 & 13.4 & op-6-12h & 57 & 84.5 & 61 & 66.6 \\
trex-4-24h & 51 & 12.5 & 45 & 23.3 & trex-2-3h & 52 & 128.1 & 35 & 65.4 \\
op-4-24h & 44 & 11.2 & 42 & 20.7 & op-2-3h & 46 & 99.7 & 33 & 55.6 \\
op-5-24h & 48 & 16.8 & 45 & 23.6 & op-7-12h & 60 & 129.6 & 65 & 84.9 \\
trex-5-24h & 54 & 19.1 & 48 & 28.5 & trex-7-12h & 71 & 137.3 & 70 & 86.5 \\
trex-3-12h & 51 & 25.6 & 47 & 39.9 & op-8-12h & 64 & 164.5 & 70 & 89.0 \\
op-3-12h & 47 & 22.4 & 39 & 30.5 & trex-8-12h & 71 & 181.9 & 72 & 91.4 \\
op-6-24h & 47 & 24.5 & 47 & 27.4 & op-9-12h & 65 & 193.2 & 69 & 106.4 \\
trex-6-24h & 56 & 25.4 & 53 & 34.7 & trex-9-12h & 72 & 195.0 & 75 & 103.1 \\
sec-2-24h & 51 & 37.9 & 51 (1) & 29.7 & op-10-12h & 68 & 251.9 & 69 & 109.2 \\
op-7-24h & 51 & 34.7 & 49 & 36.9 & trex-10-12h & 72 & 284.0 & 72 & 111.6 \\
trex-7-24h & 57 & 34.4 & 58 & 42.2 & trex-3-3h & 54 & 304.1 & 44 & 87.4 \\
op-4-12h & 51 & 38.2 & 51 & 47.3 & op-3-3h & 51 & 211.2 & 39 & 74.0 \\
trex-4-12h & 57 & 49.2 & 51 & 57.4 & trex-4-3h & 62 & 573.2 & 52 & 131.1 \\
op-8-24h & 49 & 40.8 & 50 & 37.7 & op-4-3h & 57 & 484.1 & 46 & 116.7 \\
trex-8-24h & 58 & 43.2 & 57 & 46.9 & trex-2-1h & 56 & 1230.1 & 62 & 217.8 \\
op-9-24h & 56 & 45.1 & 55 & 51.5 & op-2-1h & 50 & 773.2 & 43 (1) & 158.4 \\
trex-9-24h & 62 & 53.9 & 56 & 50.1 & & & & & \\
\bottomrule
\end{tabular}
\end{table}

In Figure~\ref{fig:logplot_time} we show a logarithmic plot of the computational time taken to solve each PyPSA problem, by the HiGHS simplex solver, IPX and HiPO using normal equations and augmented system. Since it is a logarithmic plot, a straight line with slope $\omega$ corresponds to a polynomial growth with exponent $\omega$. Therefore, we can see that the time taken by the simplex solver grows like $n^{2.94}$, where $n$ is the number of variables of the problem, and it hits the maximum time of $2000s$ for the last couple of problems. The time taken by IPX instead grows like $n^{1.85}$. Using HiPO with normal equations, the growth is similar to IPX, with exponent $1.74$, but the overall time is larger; the matrix becomes very dense and the solver runs out of memory for the largest problems. Using HiPO with augmented system, the time grows like $n^{0.97}$ and the solver becomes undoubtedly the winner for large sizes. Notice also that the largest problems require only a few hundred MB of memory for the whole factorisation when using augmented system, while it would use more than 128GB with normal equations.

It may be surprising that the augmented system solver demonstrates a sub-linear computational complexity. This happens because the solves are  much more expensive than the factorisations for these problems, and because both the augmented system matrix and its factor are very sparse. 
Performing solves has a cost proportional to the number of nonzero entries in the factor, which is proportional to the size of the matrix due to the high degree of sparsity. Since solves are the the most expensive operations, this means that a single IPM iteration has actually a cost proportional to the size of the matrix. Moreover the operations become even more efficient for larger problems, due to properties of the BLAS implementations on modern machines. As a result of that the measured computational complexity is actually slightly sub-linear. This type of behaviour is not to be expected for general linear programs.

\begin{figure}
\caption{Time taken by simplex, IPX and HiPO to solve the PyPSA problems}
\label{fig:logplot_time}
\centering
\includegraphics[width=.85\textwidth]{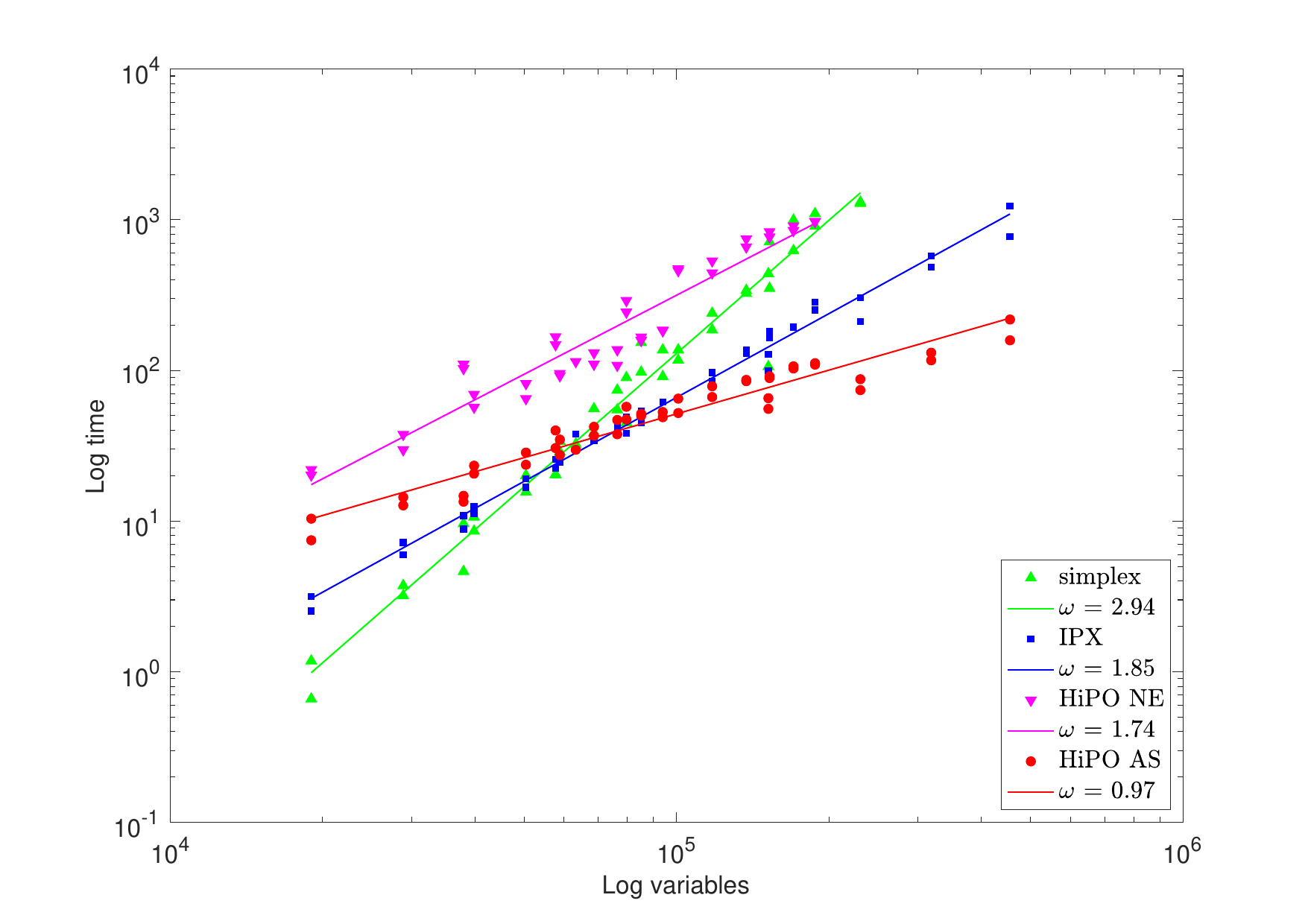}
\end{figure}

\subsection{Time per iteration}
In Figure~\ref{fig:time_per_iter}, the time taken for each IPM iteration by IPX and by HiPO using augmented system is reported, for the largest PyPSA problem \texttt{pypsa-eur-elec-trex-2-1h}.

\begin{figure}
\caption{Time taken by each IPM iteration for a medium size PyPSA problem.}
\label{fig:time_per_iter}
\centering
\includegraphics[width=.85\textwidth]{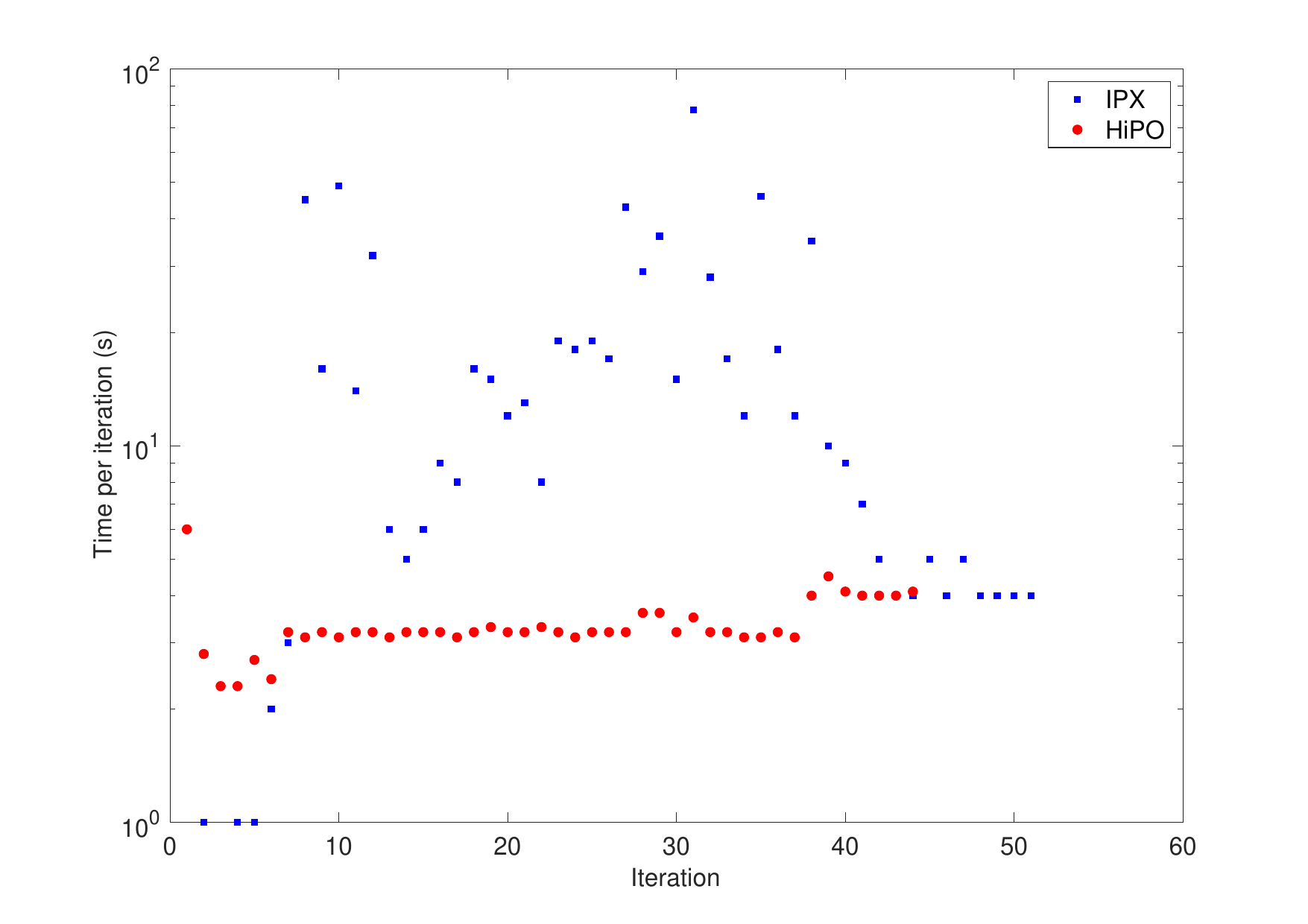}
\end{figure}

We can identify three phases of IPX:
\begin{itemize}
\item At the beginning, a diagonal preconditioner is used, which is inexpensive to compute and apply. The time per iteration is very low.
\item In the middle, a basis preconditioner is computed and the basis keeps changing substantially at each iteration. This requires expensive operations to update the factorisation of the basis. Moreover, the guessed basis is likely to be a poor approximation of the optimal one, since the IPM iterations are too far from optimality, leading to a large number of Krylov iterations. Thus, the time per iteration is very high.
\item At the end, the basis has stabilised, the preconditioner does not need to be recomputed often and fewer Krylov iterations are needed since the basis is already close to the optimal one. The time per iteration becomes small again.
\end{itemize}

We can identify three phases of HiPO as well:
\begin{itemize}
\item The first iteration appears to be more expensive, because it includes the time taken for the ordering and symbolic factorisation. 
\item In the middle, the time per iteration is very consistent, since the factorisation performs always the same number of operations, and there is no dynamic pivoting performed. The number of solves performed is also almost constant, since the linear systems are not too ill conditioned and only few iterative refinement steps are needed.
\item At the end, the linear systems are very ill conditioned and more iterative refinement steps are needed. Thus, the time per iteration increases slightly. 
\end{itemize}

The fact that the time per iteration towards the end appears to be the same for both methods is just a coincidence. The relevant observation to make is that the time per iteration of IPX has large variations and decreases substantially at the end, while the time per iteration of HiPO is almost constant, and considerably lower than that of IPX.

The fact that IPX has a low and stable time per iteration towards the end motivates the choice of using it as a backup solver: if HiPO struggles with reaching the required accuracy, but still produces a point close to the optimal vertex, then executing one or two IPX iterations is expected to complete the optimization process, i.e., reach a highly accurate solution. These iterations are likely to be cheap, since the point is already very close to being optimal. However, the cost of identifying and factorising the basis may be substantial, especially if crossover is not requested; therefore, this should be regarded only as a backup plan in the (rare) cases in which HiPO struggles, rather than as a hybrid strategy to solve linear programs.

\subsection{Results for Mittelmann collection}
The results on the Mittelmann collection are shown in Table~\ref{tab:mittelmann_results}. These problems are much larger and harder to solve than the previous ones, to the point that many commercial solvers also struggle to reach the required optimality criteria. In the table, $\dagger$ indicates that the solver reached the time limit of $5000$ seconds; $\ddagger$ indicates that the solver ran out of memory; $\ast$ indicates that the solver failed for other reasons, the most common being stagnation or internal error.

Notice that the two solvers have slightly different ways of pre- and post-processing the problem; this can lead to a solution being declared {\it imprecise} (i.e., an optimal solution that is not optimal any more after post-processing) by a solver, that would be accepted as optimal by the other, and vice versa. Given the different strategies used by the two solvers, any problem declared imprecise is considered as optimal in the statistics of Table~\ref{tab:mittelmann_results}. Notice that this only affects a couple of problems.

\begin{table}
\tiny
\caption{Results on the Mittelmann problems. The column ``Iter'' for HiPO shows also in parentheses the number of IPX iterations performed at the end (if any). $\dagger$ indicates time limit, $\ddagger$ indicates out of memory, $\ast$ indicates other failures.}
\label{tab:mittelmann_results}
\centering
\begin{tabular}{l|rr|rr||l|rr|rr}
\toprule
& \multicolumn{2}{c|}{IPX} & \multicolumn{2}{c||}{HiPO} & & \multicolumn{2}{c|}{IPX} & \multicolumn{2}{c}{HiPO} \\
\midrule
Problem & Iter & Time & Iter & Time & Problem & Iter & Time & Iter & Time\\
\midrule
a2864 & 23 & 301.9 & 12 & 513.2 & nug08-3rd & 22 & 334.3 & 11 & 34.3 \\
bdry2 &  & $\dagger$ &  & $\ast$ & pds-100 & 81 & 79.2 & 47 & 59.0 \\
cont1 & 20 & 84.0 & 22(3) & 695.8 & psched3-3 & 57 & 22.9 & 32 & 139.7 \\
cont11 & 23 & 542.0 & 47 & 55.8 & Primal2\_1000 & 40 & 742.0 & 23 & 190.2 \\
datt256 & 23 & 120.7 & 9 & 5.4 & qap15 & 30 & 14.9 & 18 & 9.8 \\
degme & 50 & 2600.6 & 32 & 255.86 & rail02 & 72 & 49.3 & 65 & 106.0 \\
dlr1 & 98 & 1048.6 & 89(1) & 377.3 & rail4284 & 104 & 329.8 & 42(2) & 119.3 \\
dlr2 &  & $\dagger$ &  & $\dagger$ & rmine15 & 53 & 167.0 & 29 & 374.5 \\
Dual2\_5000 &  & $\dagger$ &  & $\dagger$ & s100 & 79 & 66.0 & 60(1) & 27.7 \\
ex10 & 16 & 11.2 & 16 & 70.6 & s250r10 & 94 & 63.2 & 38 & 25.1 \\
fhnw-bin1 & 29 & 535.0 & 26 & 825.0 & s82 & 144 & 1459.7 & 97(7) & 267.9 \\
fome13 & 43 & 25.7 & 56(3) & 20.7 & savsched1 & 28 & 39.1 & 26(5) & 90.5 \\
graph40-40 & 15 & 6.8 & 11 & 57.0 & scpm1 & 28 & 49.4 & 26 & 125.4 \\
irish-e & 60 & 27.4 & 62 & 38.1 & set-cover & 57 & 889.6 & 27 & 661.0 \\
L1\_sixm1000obs &  & $\ast$ & 16(59) & 1673.0 & shs1023 & 135 & 194.4 &  & $\ast$ \\
L1\_sixm250obs &  & $\ast$ & 20(53) & 129.0 & square41 & 28 & 70.8 & 19(3) & 51.8 \\
L2CTA3D &  & $\ast$ &  & $\ddagger$ & stat96v2 &  & $\ast$ & 24(58) & 311.2 \\
Linf\_520c &  & $\ast$ &  & $\ast$ & stormG2\_1000 &  & $\ast$ &  & $\ast$ \\
neos & 96 & 91.7 & 79 & 311.9 & stp3d & 64 & 183.7 & 35 & 39.6 \\
neos-3025225 & 33 & 87.2 & 19 & 227.1 & support10 & 32 & 3.7 & 20 & 42.7 \\
neos-5052403 & 28 & 22.4 & 34(1) & 28.7 & thk\_48 &  & $\dagger$ &  & $\ddagger$\\
neos-5251015 & 11 & 145.2 & 21(7) & 973.8 & thk\_63 &  & $\dagger$ &  & $\ddagger$ \\
neos3 & 20 & 22.9 & 10 & 213.2 & tpl-tub-ws16 & 111 & 788.0 & 134(3) & 468.0 \\
ns1687037 &  & $\ast$ & 19 & 101.8 & woodlands09 & 27 & 223.2 & 16 & 33.8 \\
ns1688926 &  & $\ast$ &  & $\ast$  \\
\bottomrule
\end{tabular}
\end{table}

Sometimes, HiPO significantly outperforms IPX, like in the case of \texttt{cont11} where the time is $10$ times smaller; some other times, HiPO is much slower than IPX, like in the case of \texttt{support10}, where the time is $10$ times larger. Overall, considering the problems where both solvers found a solution, in 11 instances HiPO was at least two times faster than IPX, in 7 instances it was between 1 and 2 times faster, in 4 instances IPX was between 1 and 2 times faster than HiPO, and in 13 instances IPX was at least 2 times faster.

This is not surprising, since the two solvers use completely different strategies to solve the linear systems. It is for this reason that both solvers will remain available in HiGHS, rather than the new one replacing the old one; there are cases where a Krylov solver is better than a direct factorisation. Notice that around a quarter of the problems were solved using the augmented system, while for the remaining ones it was better to use the normal equations.

Most of the problems do not need additional iterations with the basis preconditioner at the end. A few problems need only a few of such iterations. Three problems need many more iterations: \texttt{L1\_sixm1000obs}, \texttt{L1\_sixm250obs}, \texttt{stat96v2}; for these problems, HiPO is essentially computing a very expensive starting point for IPX. This situation is not ideal and it will require time and effort to understand the reasons of the numerical challenges that these problems bring. It is certainly on a ``to do'' list of the Authors, but the outcome of such effort is hard to predict and the Authors did not want to delay the release of the software. 
It is important to notice that these problems are extremely challenging and that IPX alone is not able to converge for these instances.

For three problems, the factorisation code runs out of memory: \texttt{L2CTA3D}, \texttt{thk\_48}, \texttt{thk\_63}. For these problems, the factor $L$ has many billion nonzero entries; for the last two instances, 32 bit integers are not enough to store the factor, since there is integer overflow in the arrays used to store the sparsity pattern.

Overall, IPX solves 36 problems and HiPO solves 39. To evaluate the relative performance of the two solvers, we use the {\it shifted geometric mean}, in accordance with the Mittelmann benchmark \cite{mittelmann}: the shifted geometric mean of a sequence of times is given by 
\[G^\sigma\big((t_i)_{i=1,\dots,n}\big)\,\coloneqq\,\sqrt[n]{\prod_{i=1}^n (t_i+\sigma)}\,\,-\,\,\sigma\,\, = \,\,\exp\Big(\sum_{i=1}^n \log(t_i+\sigma)/n\Big) - \sigma,\]
where the formula on the right is just a more efficient way of computing it. $\sigma$ is the shift, which is set to $10$ seconds. The advantage of using shifted geometric mean is that it is not affected much by large or small outliers. 
If a solver failed to compute a solution for a given problem, the corresponding time $t_i$ is set to the time limit of $5000$ seconds. We compute three means: 
\begin{itemize}
\item Considering all 49 problems, IPX has a mean of $338$ seconds and HiPO of $274$ seconds.
\item Considering only the 40 problems for which at least one of the solvers found a solution, IPX has a mean of $181$ seconds and HiPO of $139$ seconds.
\item Considering only the 35 problems for which both solvers found a solution, IPX has a mean of $121$ seconds and HiPO of $114$ seconds.
\end{itemize}

We can see that all three ways of computing the mean give a slight advantage to HiPO over IPX. However, it should be noticed that IPX has the advantage of providing a basis together with the optimal solution,  even if crossover is not performed. For some applications, this is an important feature, worth spending the extra time. 

Considering the best of the two solvers for each problem, the shifted geometric mean over all the 49 problems becomes $175$ seconds, which is much less than that of IPX or HiPO. A heuristic strategy capable of selecting the best solver between IPX and HiPO could potentially improve significantly the running time of HiGHS.

Notice that, unless the solver runs out of memory, some iterations are performed even if the solver fails to find the optimal solution. Therefore, the next two sections analyse the performance of an average iteration, for the $46$ instances that did not run out of memory.

\subsection{Effect of multi-threading}

In Figure~\ref{fig:speedup}, the time taken to perform the factorisation alone is shown, for 46 instances in the Mittelmann collection, in serial and in parallel (8 cores). The solve time is not included because it is not parallelised. The problems are sorted according to the serial factorisation time. The first dotted line from the top indicates a speed-up of 2 times compared to the serial time. The other lines indicate a speed-up of 4, 6 and 8 times. 

It can be seen that not all problems benefit from parallelism. For some instances, the overhead of parallelisation actually slows down the solver. Many problems however achieve speed-up larger than 2 and a few problems larger than 4. Given the nature of the factorisation algorithm and the unstructured constraint matrices in the challenging Mittelmann collection, it is unrealistic to achieve large speed-ups. Moreover, the fill-reducing ordering, symbolic factorisation and solve phases do not run in parallel, so an improvement in the parallelisation of the factorisation phase may have only a small impact on the overall time.

Notice that the speed up factor depends on the specific architecture and BLAS library used. The same experiments run on the Author's Macbook, using Apple's BLAS implementation, yield different results, with more problems achieving a large speedup.

\begin{figure}
\caption{Time taken by factorisation alone for the Mittelmann collection, running in serial and in parallel (8 threads). The dotted lines indicate a speedup of 2x, 4x, 6x, 8x compared to the serial time.}
\label{fig:speedup}
\centering
\includegraphics[width=.8\textwidth]{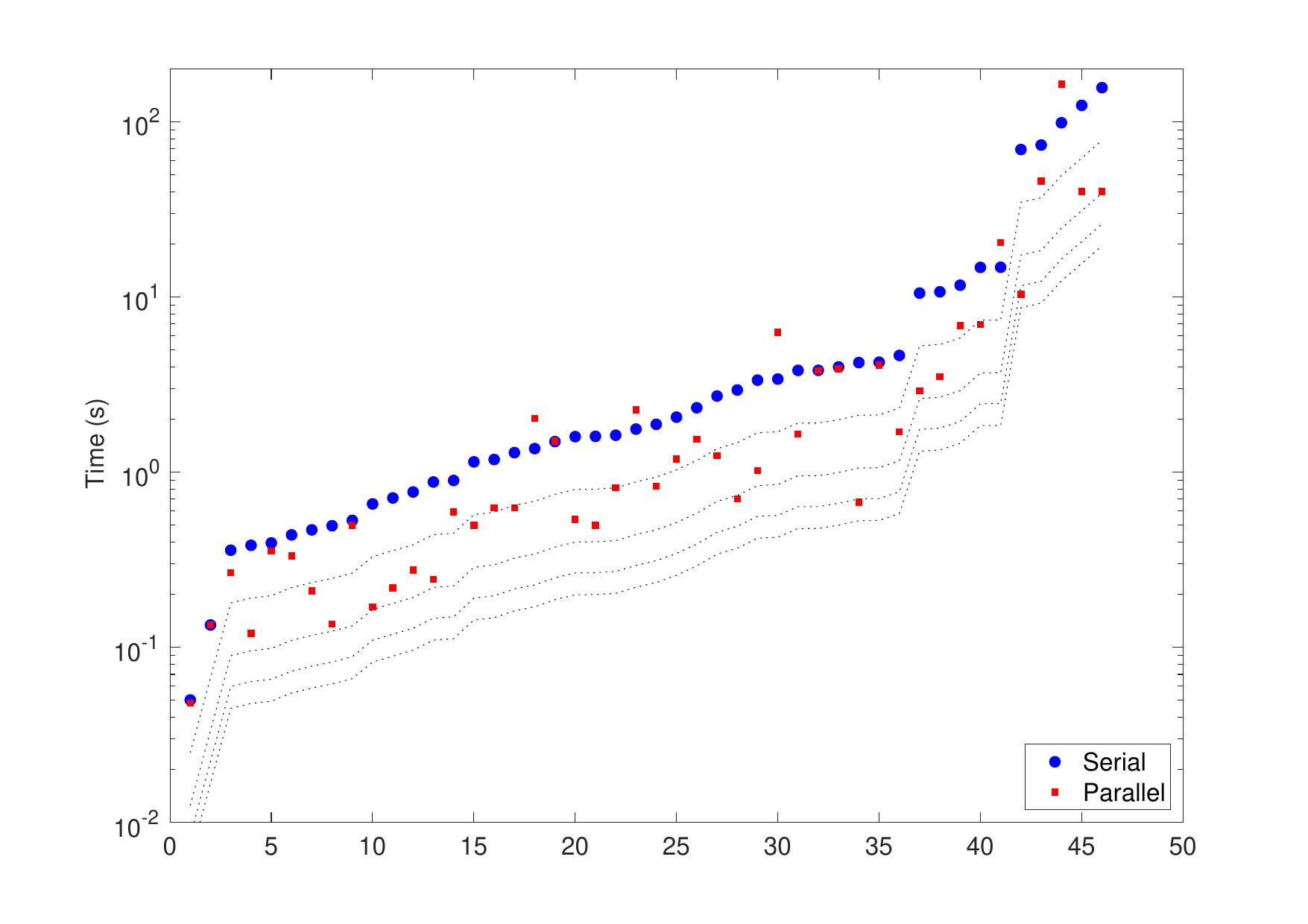}
\end{figure}

\subsection{Time fraction of various phases}

In Figure~\ref{fig:time_fraction}, we show the fraction of time spent forming the matrix (either normal equations or augmented system), factorising it and performing the solves. For most problems, the predominant task is the factorisation; however, when the normal equations approach is selected, simply forming the matrix may take a considerable amount of time. The solve phase can also be quite expensive. 

Forming the normal equations is a task that can be executed in parallel, even though the current code does not implement this. The solves are much harder to parallelise; moreover, each solve involved in centrality correctors or iterative refinement needs to wait for the previous one to finish, since they depend on one another. The fill-reducing ordering and symbolic factorisation are not shown in this graph, since they do not get executed at each iteration; they can however be quite costly for some problems and they also cannot be parallelised easily. 

Even though the factorisation is the phase that performs the largest number of floating point operations, it is also the phase that benefits most from the parallelisation and it can exploit higher level BLAS functions. For these reasons, the factorisation phase may not be the main bottleneck for some problems. Much attention should be put in how to parallelise the solves and how to compute the normal equations matrix efficiently. For some problems, the normal equations matrix is so dense that treating it as a sparse matrix may not be the best option.

Out of the 17 problems for which IPX was faster then HiPO in Table~\ref{tab:mittelmann_results}, 3 of them suffered from a particularly slow computation of normal equations, and 14 suffered from particularly slow solves. This means that there is much performance to be gained by improving these areas.

\begin{figure}
\caption{Fraction of time taken to form the matrix, factorise it and perform solves, for the Mittelmann collection.}
\label{fig:time_fraction}
\centering
\includegraphics[width=.8\textwidth]{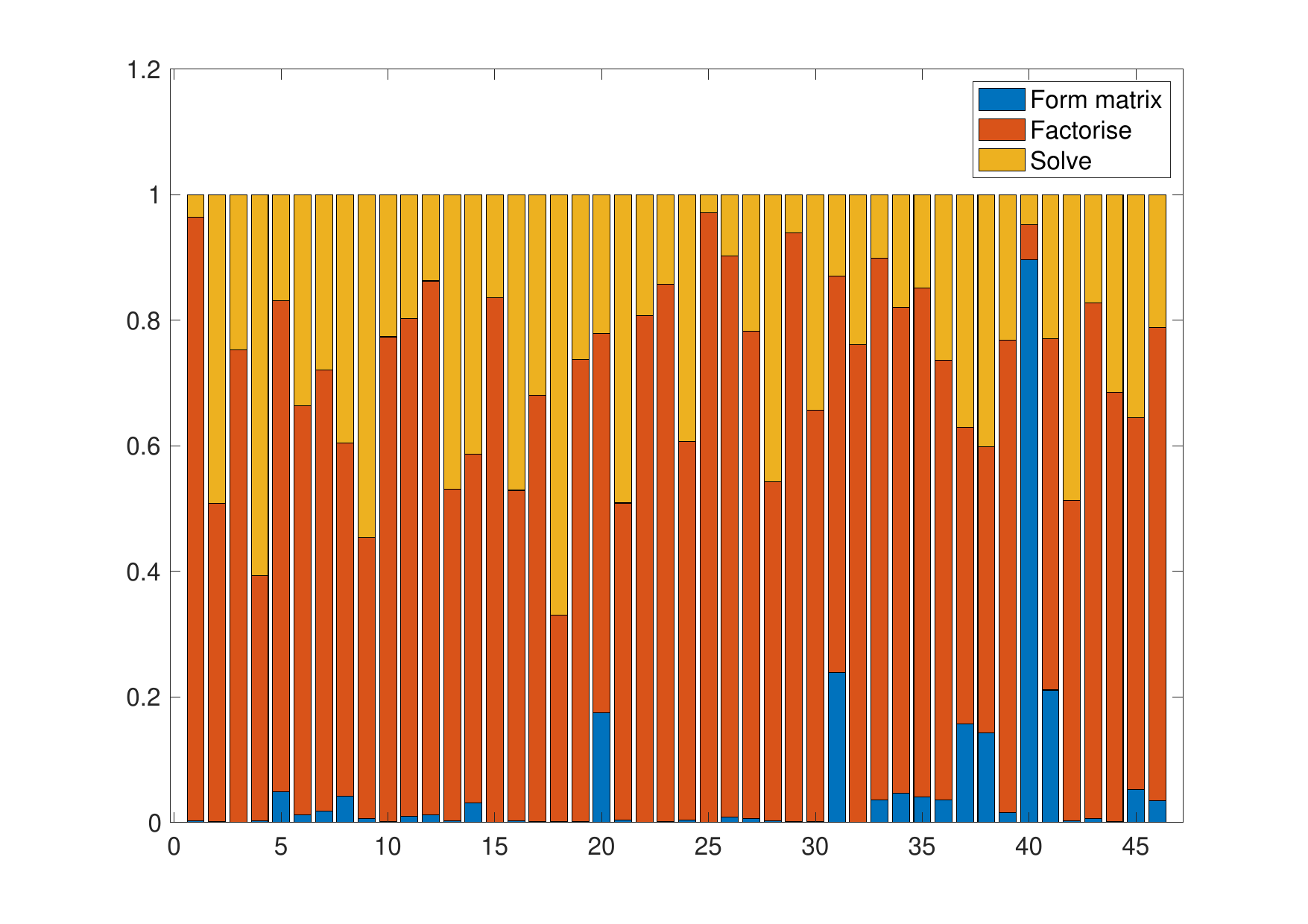}
\end{figure}

\section{Conclusion}

We have described the details of the implementation of a new factorisation-based interior point method for the HiGHS library and made the whole code fully accessible to the optimization community. The experiments show that HiPO is a high performance implementation, capable of outperforming the current HiGHS interior point solver IPX. In particular, the results on linear programs coming from energy modelling highlight the significant advantages of factorising the augmented system rather than the normal equations matrix. Future improvements to the solver will include:
\begin{itemize}
\item Enhancing the parallelisation of the code, using multi-threading also while forming the normal equations and performing solves.
\item Improving the way in which memory is allocated and managed for the various stages of the factorisation.
\item Improving the accuracy of the solver, through better regularisation and pivoting strategies.
\item Including support for quadratic programming.
\end{itemize}

\section*{Acknowledgements}
The Authors are grateful to Jennifer Scott for providing advice regarding the implementation of the multifrontal factorisation.

\bibliographystyle{siam}
\bibliography{biblio}

\appendix
\section{Problem's size}
\label{apx:size}

{\tiny
\begin{longtable}{l|rrr|rrr}
\caption{Size of original and presolved problems.}\label{tab:problem_data}\\
\hline
\multicolumn{7}{c}{MITTELMANN COLLECTION} \\
\hline
& \multicolumn{3}{c|}{Original} & \multicolumn{3}{c}{Presolved} \\
\hline
Problem & rows & cols & nz & rows & cols & nz \\
\hline
a2864 & 2.2e+04 & 2.0e+05 & 2.0e+07 & 2.1e+04 & 1.4e+04 & 1.4e+06 \\
bdry2 & 3.8e+05 & 2.5e+05 & 1.5e+06 & 2.9e+05 & 1.7e+05 & 1.8e+06 \\
cont1 & 1.6e+05 & 4.0e+04 & 4.0e+05 & 1.2e+05 & 4.0e+04 & 3.6e+05 \\
cont11 & 1.6e+05 & 8.0e+04 & 4.0e+05 & 1.2e+05 & 8.0e+04 & 3.6e+05 \\
datt256 & 1.1e+04 & 2.6e+05 & 1.5e+06 & 9.9e+03 & 2.0e+05 & 1.1e+06 \\
degme & 1.9e+05 & 6.6e+05 & 8.1e+06 & 1.9e+05 & 6.6e+05 & 8.1e+06 \\
dlr1 & 1.7e+06 & 9.1e+06 & 1.8e+07 & 4.7e+05 & 5.2e+05 & 2.2e+06 \\
dlr2 & 7.1e+06 & 3.9e+07 & 7.8e+07 & 2.0e+06 & 2.2e+06 & 9.3e+06 \\
Dual2\_5000 & 3.0e+07 & 3.3e+07 & 9.3e+07 & 3.0e+07 & 3.3e+07 & 9.3e+07 \\
ex10 & 7.0e+04 & 1.8e+04 & 1.2e+06 & 6.3e+04 & 1.6e+04 & 1.0e+06 \\
fhnw-bin1 & 7.7e+05 & 1.1e+06 & 8.6e+06 & 7.7e+05 & 1.1e+06 & 8.6e+06 \\
fome13 & 4.9e+04 & 9.8e+04 & 2.9e+05 & 2.9e+04 & 7.5e+04 & 2.5e+05 \\
graph40-40 & 3.6e+05 & 1.0e+05 & 1.3e+06 & 3.4e+05 & 9.8e+04 & 1.2e+06 \\
irish-e & 1.0e+05 & 6.2e+04 & 5.2e+05 & 6.5e+04 & 3.7e+04 & 4.0e+05 \\
L1\_sixm1000obs & 3.1e+06 & 1.4e+06 & 1.4e+07 & 5.1e+05 & 1.0e+06 & 2.2e+06 \\
L1\_sixm250obs & 9.9e+05 & 4.3e+05 & 4.3e+06 & 1.5e+05 & 3.1e+05 & 6.4e+05 \\
L2CTA3D & 2.1e+05 & 1.0e+07 & 3.0e+07 & 2.1e+05 & 8.0e+06 & 2.4e+07 \\
Linf\_520c & 9.3e+04 & 6.9e+04 & 5.7e+05 & 6.1e+04 & 6.2e+04 & 2.8e+05 \\
neos & 4.8e+05 & 3.7e+04 & 1.0e+06 & 4.2e+05 & 3.6e+04 & 9.1e+05 \\
neos-3025225 & 9.2e+04 & 7.0e+04 & 9.4e+06 & 8.1e+04 & 7.0e+04 & 4.9e+06 \\
neos-5052403 & 3.8e+04 & 3.3e+04 & 4.9e+06 & 1.9e+04 & 2.8e+04 & 2.4e+06 \\
neos-5251015 & 4.9e+05 & 1.4e+05 & 2.0e+06 & 3.9e+05 & 1.1e+05 & 1.7e+06 \\
neos3 & 5.1e+05 & 6.6e+03 & 1.5e+06 & 5.1e+05 & 6.6e+03 & 1.5e+06 \\
ns1687037 & 5.1e+04 & 4.4e+04 & 1.4e+06 & 3.6e+04 & 3.1e+04 & 1.4e+06 \\
ns1688926 & 3.3e+04 & 1.7e+04 & 1.7e+06 & 2.5e+04 & 1.6e+04 & 9.0e+05 \\
nug08-3rd & 2.0e+04 & 2.0e+04 & 1.4e+05 & 1.8e+04 & 2.0e+04 & 1.3e+05 \\
pds-100 & 1.6e+05 & 5.1e+05 & 1.1e+06 & 7.8e+04 & 4.2e+05 & 9.6e+05 \\
psched3-3 & 2.7e+05 & 8.0e+04 & 1.1e+06 & 2.0e+04 & 1.1e+04 & 2.0e+05 \\
Primal2\_1000 & 1.3e+06 & 2.6e+06 & 5.5e+06 & 6.8e+05 & 1.3e+06 & 3.6e+06 \\
qap15 & 6.3e+03 & 2.2e+04 & 9.5e+04 & 5.7e+03 & 2.2e+04 & 8.5e+04 \\
rail02 & 9.6e+04 & 2.7e+05 & 7.6e+05 & 5.0e+04 & 1.9e+05 & 6.0e+05 \\
rail4284 & 4.3e+03 & 1.1e+06 & 1.1e+07 & 4.2e+03 & 1.1e+06 & 1.1e+07 \\
rmine15 & 3.6e+05 & 4.2e+04 & 8.8e+05 & 3.6e+05 & 4.2e+04 & 8.8e+05 \\
s100 & 1.5e+04 & 3.6e+05 & 1.8e+06 & 1.4e+04 & 3.6e+05 & 1.4e+06 \\
s250r10 & 1.1e+04 & 2.7e+05 & 1.3e+06 & 6.6e+03 & 2.7e+05 & 1.2e+06 \\
s82 & 8.8e+04 & 1.7e+06 & 7.0e+06 & 8.1e+04 & 1.7e+06 & 6.8e+06 \\
savsched1 & 3.0e+05 & 3.3e+05 & 1.8e+06 & 2.9e+05 & 3.1e+05 & 1.7e+06 \\
scpm1 & 5.0e+03 & 5.0e+05 & 6.2e+06 & 5.0e+03 & 5.0e+05 & 6.2e+06 \\
set-cover & 1.0e+04 & 1.1e+06 & 2.0e+07 & 1.0e+04 & 1.1e+06 & 2.0e+07 \\
shs1023 & 1.3e+05 & 4.4e+05 & 1.0e+06 & 1.3e+05 & 4.3e+05 & 9.5e+05 \\
square41 & 4.0e+04 & 6.2e+04 & 1.4e+07 & 1.8e+03 & 2.4e+04 & 4.3e+06 \\
stat96v2 & 2.9e+04 & 9.6e+05 & 2.9e+06 & 2.2e+04 & 9.3e+05 & 2.8e+06 \\
stormG2\_1000 & 5.3e+05 & 1.3e+06 & 3.3e+06 & 3.8e+05 & 1.1e+06 & 2.9e+06 \\
stp3d & 1.6e+05 & 2.0e+05 & 6.6e+05 & 1.4e+05 & 1.8e+05 & 5.8e+05 \\
support10 & 1.7e+05 & 1.5e+04 & 5.6e+05 & 1.1e+05 & 9.0e+03 & 3.6e+05 \\
thk\_48 & 6.4e+06 & 8.6e+06 & 2.8e+07 & 6.0e+06 & 8.2e+06 & 2.6e+07 \\
thk\_63 & 5.7e+06 & 7.7e+06 & 2.2e+07 & 5.2e+06 & 7.1e+06 & 2.0e+07 \\
tpl-tub-ws16 & 1.2e+06 & 7.5e+05 & 4.7e+06 & 3.2e+05 & 7.1e+05 & 2.5e+06 \\
woodlands09 & 1.9e+05 & 3.8e+05 & 2.6e+06 & 4.2e+04 & 2.3e+05 & 2.4e+06 \\
\hline
\hline
\multicolumn{7}{c}{PYPSA-EUR PROBLEMS} \\
\hline
& \multicolumn{3}{c|}{Original} & \multicolumn{3}{c}{Presolved} \\
\hline
Problem & rows & cols & nz & rows & cols & nz \\
\hline
pypsa-eur-elec-trex-2-24h & 3.9e+04 & 1.9e+04 & 7.2e+04 & 1.3e+04 & 1.7e+04 & 4.5e+04 \\
pypsa-eur-elec-op-2-24h & 3.9e+04 & 1.9e+04 & 7.1e+04 & 1.3e+04 & 1.7e+04 & 4.3e+04 \\
pypsa-eur-elec-trex-3-24h & 6.0e+04 & 2.9e+04 & 1.1e+05 & 2.2e+04 & 2.5e+04 & 7.3e+04 \\
pypsa-eur-elec-op-3-24h & 6.0e+04 & 2.9e+04 & 1.1e+05 & 2.1e+04 & 2.5e+04 & 6.9e+04 \\
pypsa-eur-elec-trex-2-12h & 7.7e+04 & 3.8e+04 & 1.4e+05 & 2.7e+04 & 3.3e+04 & 8.9e+04 \\
pypsa-eur-elec-op-2-12h & 7.7e+04 & 3.8e+04 & 1.4e+05 & 2.5e+04 & 3.3e+04 & 8.6e+04 \\
pypsa-eur-elec-trex-4-24h & 8.5e+04 & 4.0e+04 & 1.7e+05 & 3.2e+04 & 3.5e+04 & 1.1e+05 \\
pypsa-eur-elec-op-4-24h & 8.5e+04 & 4.0e+04 & 1.6e+05 & 3.1e+04 & 3.4e+04 & 1.0e+05 \\
pypsa-eur-elec-op-5-24h & 1.1e+05 & 5.0e+04 & 2.1e+05 & 4.2e+04 & 4.3e+04 & 1.3e+05 \\
pypsa-eur-elec-trex-5-24h & 1.1e+05 & 5.0e+04 & 2.1e+05 & 4.2e+04 & 4.4e+04 & 1.4e+05 \\
pypsa-eur-elec-trex-3-12h & 1.2e+05 & 5.8e+04 & 2.3e+05 & 4.4e+04 & 5.0e+04 & 1.4e+05 \\
pypsa-eur-elec-op-3-12h & 1.2e+05 & 5.8e+04 & 2.2e+05 & 4.2e+04 & 5.0e+04 & 1.4e+05 \\
pypsa-eur-elec-op-6-24h & 1.3e+05 & 5.9e+04 & 2.4e+05 & 5.0e+04 & 4.9e+04 & 1.6e+05 \\
pypsa-eur-elec-trex-6-24h & 1.3e+05 & 5.9e+04 & 2.5e+05 & 5.3e+04 & 5.1e+04 & 1.7e+05 \\
pypsa-eur-sec-2-24h & 1.3e+05 & 6.3e+04 & 2.9e+05 & 4.7e+04 & 4.6e+04 & 1.8e+05 \\
pypsa-eur-elec-op-7-24h & 1.5e+05 & 6.9e+04 & 2.9e+05 & 6.1e+04 & 5.8e+04 & 1.9e+05 \\
pypsa-eur-elec-trex-7-24h & 1.5e+05 & 6.9e+04 & 3.0e+05 & 6.2e+04 & 5.9e+04 & 2.0e+05 \\
pypsa-eur-elec-op-4-12h & 1.7e+05 & 8.0e+04 & 3.2e+05 & 6.2e+04 & 6.7e+04 & 2.0e+05 \\
pypsa-eur-elec-trex-4-12h & 1.7e+05 & 8.0e+04 & 3.3e+05 & 6.4e+04 & 6.9e+04 & 2.1e+05 \\
pypsa-eur-elec-op-8-24h & 1.6e+05 & 7.6e+04 & 3.2e+05 & 6.5e+04 & 6.5e+04 & 2.1e+05 \\
pypsa-eur-elec-trex-8-24h & 1.6e+05 & 7.6e+04 & 3.2e+05 & 6.8e+04 & 6.6e+04 & 2.2e+05 \\
pypsa-eur-elec-op-9-24h & 1.8e+05 & 8.5e+04 & 3.5e+05 & 7.3e+04 & 7.2e+04 & 2.3e+05 \\
pypsa-eur-elec-trex-9-24h & 1.8e+05 & 8.5e+04 & 3.6e+05 & 7.6e+04 & 7.4e+04 & 2.4e+05 \\
pypsa-eur-elec-trex-10-24h & 2.0e+05 & 9.4e+04 & 4.0e+05 & 8.5e+04 & 8.1e+04 & 2.8e+05 \\
pypsa-eur-elec-op-10-24h & 2.0e+05 & 9.4e+04 & 3.9e+05 & 8.3e+04 & 7.9e+04 & 2.6e+05 \\
pypsa-eur-elec-op-5-12h & 2.2e+05 & 1.0e+05 & 4.2e+05 & 8.4e+04 & 8.5e+04 & 2.6e+05 \\
pypsa-eur-elec-trex-5-12h & 2.2e+05 & 1.0e+05 & 4.3e+05 & 8.5e+04 & 8.7e+04 & 2.8e+05 \\
pypsa-eur-elec-trex-6-12h & 2.5e+05 & 1.2e+05 & 5.0e+05 & 1.0e+05 & 1.0e+05 & 3.3e+05 \\
pypsa-eur-elec-op-6-12h & 2.5e+05 & 1.2e+05 & 4.9e+05 & 1.0e+05 & 9.8e+04 & 3.1e+05 \\
pypsa-eur-elec-trex-2-3h & 3.1e+05 & 1.5e+05 & 5.7e+05 & 1.0e+05 & 1.3e+05 & 3.4e+05 \\
pypsa-eur-elec-op-2-3h & 3.1e+05 & 1.5e+05 & 5.7e+05 & 9.5e+04 & 1.3e+05 & 3.3e+05 \\
pypsa-eur-elec-op-7-12h & 3.0e+05 & 1.4e+05 & 5.8e+05 & 1.2e+05 & 1.1e+05 & 3.8e+05 \\
pypsa-eur-elec-trex-7-12h & 3.0e+05 & 1.4e+05 & 6.0e+05 & 1.2e+05 & 1.2e+05 & 4.0e+05 \\
pypsa-eur-elec-op-8-12h & 3.3e+05 & 1.5e+05 & 6.3e+05 & 1.3e+05 & 1.3e+05 & 4.1e+05 \\
pypsa-eur-elec-trex-8-12h & 3.3e+05 & 1.5e+05 & 6.4e+05 & 1.3e+05 & 1.3e+05 & 4.3e+05 \\
pypsa-eur-elec-op-9-12h & 3.6e+05 & 1.7e+05 & 7.0e+05 & 1.5e+05 & 1.4e+05 & 4.6e+05 \\
pypsa-eur-elec-trex-9-12h & 3.6e+05 & 1.7e+05 & 7.2e+05 & 1.5e+05 & 1.5e+05 & 4.8e+05 \\
pypsa-eur-elec-op-10-12h & 4.1e+05 & 1.9e+05 & 7.9e+05 & 1.7e+05 & 1.6e+05 & 5.2e+05 \\
pypsa-eur-elec-trex-10-12h & 4.1e+05 & 1.9e+05 & 8.0e+05 & 1.7e+05 & 1.6e+05 & 5.5e+05 \\
pypsa-eur-elec-trex-3-3h & 4.8e+05 & 2.3e+05 & 9.0e+05 & 1.7e+05 & 1.9e+05 & 5.5e+05 \\
pypsa-eur-elec-op-3-3h & 4.8e+05 & 2.3e+05 & 8.9e+05 & 1.6e+05 & 1.9e+05 & 5.2e+05 \\
pypsa-eur-elec-trex-4-3h & 6.8e+05 & 3.2e+05 & 1.3e+06 & 2.5e+05 & 2.6e+05 & 8.1e+05 \\
pypsa-eur-elec-op-4-3h & 6.8e+05 & 3.2e+05 & 1.3e+06 & 2.4e+05 & 2.5e+05 & 7.6e+05 \\
pypsa-eur-elec-trex-2-1h & 9.3e+05 & 4.6e+05 & 1.7e+06 & 3.0e+05 & 3.8e+05 & 1.0e+06 \\
pypsa-eur-elec-op-2-1h & 9.3e+05 & 4.6e+05 & 1.7e+06 & 2.8e+05 & 3.8e+05 & 9.7e+05 \\
\hline
\end{longtable}
}

\end{document}